\title[On the structure of symmetric $n$-ary bands]{On the structure of symmetric $n$-ary bands}
\author{Jimmy Devillet}
\address{University of Luxembourg, Department of Mathematics, Maison du Nombre, 6, avenue de la Fonte, L-4364 Esch-sur-Alzette, Luxembourg}
\email{jimmy.devillet[at]uni.lu}
\author{Pierre Mathonet}
\address{University of Li\`ege, Department of Mathematics, All\'ee de la D\'ecouverte, 12 - B37, B-4000 Li\`ege, Belgium}
\email{p.mathonet[at]uliege.be}
\date{April 26, 2020}
\begin{document}

\theoremstyle{plain}
\newtheorem{theorem}{Theorem}[section]
\newtheorem{lemma}[theorem]{Lemma}
\newtheorem{proposition}[theorem]{Proposition}
\newtheorem{corollary}[theorem]{Corollary}
\newtheorem{fact}[theorem]{Fact}
\newtheorem{conjecture}[theorem]{Conjecture}
\newtheorem*{main}{Main Theorem}

\theoremstyle{definition}
\newtheorem{definition}[theorem]{Definition}
\newtheorem{example}[theorem]{Example}
\newtheorem{algorithm}{Algorithm}

\theoremstyle{remark}
\newtheorem{remark}{Remark}
\newtheorem{claim}{Claim}

\newcommand{\N}{\mathbb{N}}
\newcommand{\Z}{\mathbb{Z}}
\newcommand{\R}{\mathbb{R}}
\newcommand{\ran}{\mathrm{ran}}
\newcommand{\Cdot}{\boldsymbol{\cdot}}
\newcommand{\cw}{\curlywedge}
\renewcommand{\l}{\ell}

\begin{abstract}
We study the class of symmetric $n$-ary bands. These are $n$-ary semigroups $(X,F)$ such that $F$ is invariant under the action of permutations and idempotent, i.e., satisfies $F(x,\ldots,x)=x$ for all $x\in X$. 
We first provide a structure theorem for these symmetric $n$-ary bands that extends the classical (strong) semilattice decomposition of certain classes of bands. We introduce the concept of strong $n$-ary semilattice of $n$-ary semigroups and we show that the symmetric $n$-ary bands are exactly the strong $n$-ary semilattices of $n$-ary extensions of Abelian groups whose exponents divide $n-1$. Finally, we use the structure theorem to obtain necessary and sufficient conditions for a symmetric $n$-ary band to be reducible to a semigroup.
\end{abstract}

\keywords{Semigroup, polyadic semigroup, semilattice decomposition, reducibility, idempotency}

\subjclass[2010]{Primary 20M10, 20M14, 20N15; Secondary 08A30, 20K25}

\maketitle

\section{Introduction}\label{Sec1}
A band is a semigroup $(X,G)$ where $X$ is a nonempty set and the binary associative operation $G \colon X^2\to X$ is idempotent, i.e., satisfies $G(x,x)=x$ for every $x\in X$. Bands and more generally semigroups have been extensively studied from the second half of the 20th century, starting with the works of Clifford \cite{Cli54} and McLean \cite{Mac54}, among others. We refer the reader to \cite{Cli61,Pet73,Pet77,How95} or more recently \cite{Gri01} for more information. One of the first results in the structural theory of semigroups is the existence of semilattice decompositions \cite{Gri01}.
This concept can be further refined to define strong semilattice decompositions (see Section \ref{Sec2.2} for details), which proved to be very useful in the analysis of particular classes of semigroups. In this paper we generalize strong semilattice decompositions to structures with higher arities. 

The concept of $n$-ary semigroup (where $n\geqslant 2$ is an integer) was introduced by D\"{o}rnte \cite{Dor28} and further studied by Post \cite{Pos40} in the framework of $n$-ary groups and their reductions.
It is defined as follows. An $n$-ary operation $F\colon X^n\to X$ is \emph{associative} if 
\begin{multline}\label{assoc}
F(x_1,\ldots,x_{i-1},F(x_i,\ldots,x_{i+n-1}),x_{i+n},\ldots,x_{2n-1})\\
=~ F(x_1,\ldots,x_i,F(x_{i+1},\ldots,x_{i+n}),x_{i+n+1},\ldots,x_{2n-1}),
\end{multline}
for all $x_1,\ldots,x_{2n-1}\in X$ and all $1\leq i\leq n-1$. If $F$ is an $n$-ary associative operation on $X$, then $(X,F)$ is an $n$-ary semigroup.
Symmetry and idempotency can also be extended to $n$-ary operations as follows: $F$ is \emph{idempotent} if $F(x,\ldots,x)=x$ for every $x\in X$ and $F$ is \emph{symmetric} (or \emph{commutative}) if $F$ is invariant under the action of permutations.

It is easy to provide examples of such $n$-ary semigroups by extending binary semigroups: if $G\colon X^2\to X$ is an associative operation, then we can define a sequence of operations inductively by setting $G^1=G$, and
$$
G^m(x_1,\ldots,x_{m+1}) ~=~ G^{m-1}(x_1,\ldots,x_{m-1},G(x_m,x_{m+1})), \qquad m\geq 2.
$$
Setting $F=G^{n-1}$, the pair $(X,F)$ is then an $n$-ary semigroup. It is the \emph{$n$-ary extension} of $(X,G)$\footnote{We also say that $F$ is the $n$-ary extension of $G$ or that $F$ is reducible to $G$ or even that $G$ is a binary reduction of $F$.}. There are also $n$-ary semigroups that are not reducible to binary semigroups. A simple example is given by the ternary operation $F$ defined on $\R^3$ by $F(x_1,x_2,x_3)=x_1-x_2+x_3$. The problem of reducibility was considered in a series of recent papers. In \cite{DudMuk06} necessary and sufficient conditions for reducibility were given in terms of neutral elements (see Subsection \ref{Sec2.1}). In \cite{KiSom18,LehSta19} this problem was considered in the context of sets $X$ endowed with additional structure. More recently, in \cite{Ack20,CouDev,DevKisMar19} the class of quasitrivial $n$-ary semigroups was investigated. These are $n$-ary semigroups $(X,F)$ such that $F(x_1,\ldots,x_n)\in\{x_1,\ldots,x_n\}$ for all $x_1,\ldots,x_n\in X$. For $n=2$, the quasitrivial semigroups were described by L\"{a}nger \cite{Lan80}. It was shown \cite{CouDev} that all quasitrivial $n$-ary semigroups are reducible. These $n$-ary semigroups are clearly idempotent. In \cite{CouDevMarMat19}, the authors relaxed the quasitriviality condition by considering operations whose restrictions on certain subsets of the domain  are quasitrivial. These operations are also reducible.

In the present work, we consider the class of symmetric idempotent $n$-ary semigroups. By analogy with the terminology used for binary structures we call them symmetric (or commutative) $n$-ary bands. The process of extension described above enables us to present two important examples of symmetric $n$-ary bands: if $(X,\cw)$ is a semilattice, then we can extend the semilattice operation to $X^n$ using associativity, and define an $n$-ary operation $F\colon X^n\to X$ by $F(x_1,\ldots,x_n)=x_1\cw\cdots\cw x_n$. Since the extension procedure preserves associativity, symmetry and idempotency, this operation defines a symmetric $n$-ary band on $X$.
Another class of symmetric $n$-ary bands is given by considering $n$-ary extensions of groups: if $(X,\ast)$ is an Abelian group whose exponent divides $n-1$, then we can extend the group operation $\ast$ to an $n$-ary operation by setting $F(x_1,\ldots,x_n)=x_1\ast\cdots\ast x_n$. The hypotheses on the group operation ensure that this operation defines a symmetric $n$-ary band. Both classes of examples are made up of $n$-ary operations that are reducible to binary symmetric semigroup operations. They will play a central role in our constructions. However, this raises the natural question of existence of symmetric $n$-ary bands that are not $n$-ary extensions of commutative binary semigroups. The answer to this question is positive, as shown by the following examples.
\begin{example}\label{ex:1.1}
\begin{enumerate}\item[(a)]
 We consider the set $X=\{1,2,3,4\}$ and we define the symmetric ternary operation $F_1\colon X^3\to X$ by its level sets given (up to permutations) by $F_1^{-1}(\{1\})=\{(1,1,1)\}$, $F_1^{-1}(\{2\})=\{(2,2,2)\}$, $F_1^{-1}(\{3\})=\{(1,1,2),(1,1,3),$\\$(1,2,4),(1,3,4),(2,2,3),(2,3,3),(2,4,4),(3,3,3),(3,4,4)\}$. Then $F_1^{-1}(\{4\})$ is made up of all the remaining elements of $X^3$. This operation defines a symmetric ternary band and is not reducible to any binary operation.
\item[(b)]
 We still consider the set $X=\{1,2,3,4\}$ and we define the symmetric ternary operation $F_2\colon X^3\to X$ again by its level sets given (up to permutations) for the first three elements by $F_2^{-1}(\{1\})=\{(1,1,1)\}$, $F_2^{-1}(\{2\})=\{(2,2,2)\}$, $F_2^{-1}(\{3\})=$\\$\{(1,1,3),(1,2,3),(1,3,4),(2,2,3),(2,3,4),(3,3,3),(3,4,4)\}$. Then $F_1^{-1}(\{4\})$ is again made up of all the remaining elements. This operation defines a symmetric ternary band. It turns out that it is reducible to a binary operation on $X$.\end{enumerate}
\end{example}
These examples can be checked by hands, by tedious computations. We will develop tools (namely the strong semilattice decompositions of symmetric $n$-ary bands) that will enable us to check these properties and to build such examples very easily.

 More precisely, we show in Section \ref{Sec3} that we can associate with any symmetric $n$-ary band $(X,F)$ a (binary) band $(X,B)$, that is in general not commutative, but is right normal. We study the properties of this band and provide necessary and sufficient conditions for $(X,F)$ to be the $n$-ary extension of a semilattice, or of an Abelian group. In Section \ref{Sec4} we show that the strong semilattice decomposition of this band (see Subsection \ref{Sec2.2}) induces a decomposition of the $n$-ary band $(X,F)$. This leads us to introduce the concept of strong $n$-ary semilattice decomposition of a (symmetric) $n$-ary band. The restriction of $F$ to each subset of this decomposition defines an $n$-ary semigroup. We show that this semigroup is the $n$-ary extension of a commutative group whose exponent divides $n-1$. Finally, we show that the converse construction can be carried out: if $(X,B)$ is a right normal band and if each subset of its greatest semilattice decomposition is endowed with a commutative group structure whose exponent divides $n-1$, that is compatible with $(X,B)$, then we can build a symmetric $n$-ary band $(X,F)$ whose associated binary band is $(X,B)$. These two constructions provide a structure theorem for symmetric $n$-ary bands (see Theorem \ref{thm:mainnband}). Finally, in Section \ref{Sec5} we give necessary and sufficient conditions for a symmetric $n$-ary band to be reducible to a semigroup using its $n$-ary semilattice decomposition.

\section{Notation and standard constructions}\label{Sec2}
In this section we fix the notation and recall results that will be useful for our developments. These results come from the general theory of $n$-ary semigroups on the one hand and from the classical theory of semigroups on the other hand. 
\subsection{General theory of $n$-ary semigroups}\label{Sec2.1}
Throughout this work we consider a nonempty set $X$ and an integer $n\geqslant 2$. Then $(X,F)$ is an \emph{$n$-ary groupoid} if $F\colon X^n\to X$ is an $n$-ary operation. 
If $F$ is associative (i.e., satisfies \eqref{assoc}), then $(X,F)$ is an $n$-ary semigroup.

As usual, a homomorphism of $n$-ary groupoids from $(X,F)$ to $(Y,F')$ is a map $\varphi\colon X \to Y$ such that
\[
\varphi(F(x_1,\ldots,x_n)) = F'(\varphi(x_1),\ldots,\varphi(x_n)),\qquad x_1,\ldots,x_n\in X.
\] 
If $(X,F)=(Y,F')$ we also say that $\varphi$ is an endomorphism on $(X,F)$.
Two $n$-ary groupoids $(X,F)$ and $(Y,F')$ are isomorphic if there exists a bijective homomorphism from $(X,F)$ to $(Y,F')$. 

The extension process described in Section \ref{Sec1} for semigroups can be adapted for $n$-ary semigroups. Indeed, for any $n$-ary associative operation $F\colon X^n\to X$, a sequence $(F^q)_{q\geq 1}$ of $(qn-q+1)$-ary operations can be defined inductively by the rules $F^1 = F$ and
\begin{equation}\label{eq:extension}
F^{q}(x_1,\ldots,x_{qn-q+1}) ~=~ F^{q-1}(x_1,\ldots,x_{(q-1)n-q+1},F(x_{(q-1)n-q+2},\ldots,x_{qn-q+1})),
\end{equation}
for any integer $q\geq 2$ and any $x_1,\ldots,x_{qn-q+1} \in X$. It is straightforward to see that $F^q$ is associative. Moreover, it is idempotent whenever $F$ is idempotent. Also, it was shown in \cite[Lemma 2.10]{LehPil14} that $F^q$ is symmetric whenever $F$ is symmetric.

Concerning binary reductions, we will use the criterion provided in \cite{DudMuk06}, using the concept of neutral element.
Recall that $e\in X$ is said to be \emph{neutral} for $F\colon X^n\to X$ if
$$
F((k-1)\Cdot e,x,(n-k)\Cdot e) ~=~ x,\qquad x\in X,~k\in\{1,\ldots,n\},
$$
where, for any $k\in\{0,\ldots,n\}$ and any $x\in X$, the notation $k\Cdot x$ stands for the $k$-tuple $x,\ldots,x$ (for instance $F(3\Cdot x,0\Cdot y,2\Cdot z)=F(x,x,x,z,z)$).

It was first proved \cite[Lemma 1]{DudMuk06} that if an associative operation $F\colon X^n \to X$ has a neutral element $e$, then it is reducible to the associative operation $G_{e}\colon X^2\to X$ defined by
\begin{equation}\label{eq:dud}
G_{e}(x,y) ~=~ F(x,(n-2)\Cdot e,y),\qquad x,y \in X.
\end{equation}
Moreover, Theorem 1 of \cite{DudMuk06} states that an associative $n$-ary operation is reducible to an associative binary operation if and only if one can adjoin to $X$ a neutral element $e$ for $F$; that is, there is an $n$-ary associative operation $F^*$ defined on $\left(X\cup\{e\}\right)^n$ that extends $F$ and for which $e$ is a neutral element.

Throughout this work, we denote the set of neutral elements for $F$ by $E_F$.

Finally, recall that an equivalence relation $\sim$ on $X$ is a \emph{congruence} for $F\colon X^n\to X$ (or on $(X,F)$) if it is compatible with $F$, i.e., if $F(x_{1},\ldots,x_{n}) \sim F(y_{1},\ldots,y_{n})$ for any $x_1,\ldots,x_n,y_1,\ldots,y_n\in X$ such that $x_i\sim y_i$ for all $i\in \{1,\ldots,n\}$. We denote by $[x]_{\sim}$ or simply by $[x]$ the class of $x$ with respect to $\sim$ and  by $\tilde{F}$ the map induced by $F$ on $X/\sim$ defined by
\[
\tilde{F}([x_1]_{\sim},\ldots,[x_n]_{\sim})=[F(x_1,\ldots,x_n)]_{\sim},\quad\forall x_1,\ldots,x_n\in X.
\]

\subsection{Semilattice decompositions of semigroups}\label{Sec2.2}
In this section, we recall the most important semigroup constructions that we will use in the following sections. Details may be found in \cite{Cli61,How95,Pet73,Pet77,Qui17}.
In what follows, we denote a binary operation on a set $X$ by $G$ or $B$ or simply by $\ast$. If we want to insist on the fact that this operation defines a semilattice, we also denote it by $\cw$.

A congruence on a groupoid $(X,G)$ is a \emph{semilattice congruence} if $(X/\sim,\tilde{G})$ is a semilattice. The partition of $X$ induced by a semilattice congruence is called a \emph{semilattice decomposition of $X$}. It is well known \cite{Put,Tam} that every semigroup admits a smallest semilattice congruence. The associated decomposition is the greatest semilattice decomposition. For a band $(X,G)$ the smallest semilattice congruence $\sim$ is given by (see \cite{Pet77})
\begin{equation}\label{leastcong}
x\sim y \quad \Leftrightarrow \quad G(G(x,y),x)=x \quad \mbox{and} \quad G(G(y,x),y)=y, \qquad x,y\in X.
\end{equation}
Thus a semilattice congruence on $(X,G)$ induces a decomposition of the set $X$ as a disjoint union of the family of subsets $\{X_{\alpha}\colon\alpha\in Y\}$ where $(Y,\cw)$ is a semilattice. The operation $G$ induces on each $X_\alpha$ a semigroup operation $G_\alpha$. Moreover, we have 
\begin{equation}\label{eq:semisem}
G(X_{\alpha}\times X_{\beta}) \subseteq X_{\alpha \curlywedge \beta}, \qquad \alpha,\beta\in Y.
\end{equation}
Such a structure is called a \emph{semilattice $(Y,\curlywedge)$ of semigroups $(X_{\alpha},G_{\alpha})$} and we write $(X,G) = ((Y,\curlywedge);(X_{\alpha},G_{\alpha}))$. Obviously the fact that a groupoid is a semilattice of semigroups does not ensure that it is itself a semigroup. Indeed, by \eqref{eq:semisem} we only know that $G(x,y)\in X_{\alpha \curlywedge \beta}$ for any $(x,y)\in X_{\alpha}\times X_{\beta}$ but this relation does not constrain $G(x,y)$ sufficiently to ensure associativity. In order to get additional information we need to introduce the concept of strong semilattice of semigroups (see \cite{Pet77}).
\begin{definition}\label{def:strong}
Let $(X,G) = ((Y,\curlywedge);(X_{\alpha},G_{\alpha}))$ be a semilattice of semigroups. Suppose that for any $\alpha,\beta\in Y$ such that $\alpha\geqslant \beta$ (i.e., $\alpha\cw\beta=\beta$) there is a homomorphism $\varphi_{\alpha,\beta}\colon X_{\alpha} \to X_{\beta}$ such that the following conditions hold.
\begin{enumerate}
\item[(a)] The map $\varphi_{\alpha,\alpha}$ is the identity on $X_{\alpha}$.
\item[(b)] For any $\alpha,\beta,\gamma\in Y$ such that $\alpha \geqslant \beta \geqslant \gamma$ we have $\varphi_{\beta,\gamma}\circ \varphi_{\alpha,\beta} = \varphi_{\alpha,\gamma}$.
\item[(c)] For any $x\in X_{\alpha}$ and any $y\in X_{\beta}$ we have $G(x,y) = G_{\alpha\curlywedge \beta}(\varphi_{\alpha,\alpha \curlywedge \beta}(x),\varphi_{\beta,\alpha\curlywedge \beta}(y))$.
\end{enumerate}
Then $(X,G)$ is said to be a \emph{strong semilattice $(Y,\curlywedge)$ of semigroups $(X_{\alpha},G_{\alpha})$}. In this case we write $(X,G) = ((Y,\curlywedge);(X_{\alpha},G_{\alpha});\varphi_{\alpha,\beta})$ (or simply $[Y,X_\alpha,\varphi_{\alpha,\beta}]$) and $(X,G)$ is called a strong semilattice of semigroups.
\end{definition}
As explained in \cite[p.16]{Pet77}, this definition can be seen in two different ways: starting with a semigroup $(X,G)$, we can decompose it using a semilattice congruence, and the structure $(X,G)$ induces the homomorphisms $\varphi_{\alpha,\beta}$. But starting the other way, given a family of semigroups indexed by a semilattice $(Y,\cw)$ and homomorphisms satisfying conditions (a) and (b) of Definition \ref{def:strong}, if we define a multiplication by condition (c), then we have a semigroup whose multiplication induces the given data: any strong semilattice of semigroups is a semigroup (see \cite[p.89]{How95} for a detailed proof).

In the next sections we will deal with bands having additional properties. Recall that a band $(X,G)$ is \emph{right normal} if $G(G(x,y),z) = G(G(y,x),z)$ for any $x,y,z\in X$. If $(X,G)$ is a right normal band, then its least semilattice congruence $\sim$ can be characterized more easily: we have
\begin{equation}\label{eq:sigma}
x\sim y \quad \Leftrightarrow \quad G(y,x)=x \quad\mbox{and}\quad G(x,y)=y, \qquad x,y\in X.
\end{equation}
We will also make use of \emph{right zero} semigroups. A right zero semigroup is a semigroup $(X,G)$ where $G(x,y)=y$ for all $x,y\in X$ (see \cite[p.29]{Pet77}). They appear naturally in the strong semilattice decomposition of right normal bands as shown by the following proposition. 
\begin{proposition}[{see \cite{How95}}]\label{prop:rightn}
A band $(X,G)$ is right normal if and only if it is a strong semilattice of right zero semigroups.
\end{proposition}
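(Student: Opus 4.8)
The plan is to prove both implications of the equivalence, since the statement is an ``if and only if.'' For the easier direction, I would assume that $(X,G)$ is a strong semilattice $((Y,\cw);(X_\alpha,G_\alpha);\varphi_{\alpha,\beta})$ of right zero semigroups and show that the resulting band is right normal. Here I can invoke the remark following Definition~\ref{def:strong} that any strong semilattice of semigroups is itself a semigroup, so $(X,G)$ is automatically associative; and since each right zero semigroup is a band (the operation $G_\alpha(x,y)=y$ is idempotent), condition~(c) together with idempotency of each piece shows $(X,G)$ is idempotent, hence a band. To verify right normality I would take $x\in X_\alpha$, $y\in X_\beta$, $z\in X_\gamma$ and compute both $G(G(x,y),z)$ and $G(G(y,x),z)$ using condition~(c) repeatedly; the key point is that $G(x,y)$ and $G(y,x)$ both land in $X_{\alpha\cw\beta}$ and that, because $X_{\alpha\cw\beta}$ is a \emph{right zero} semigroup, multiplying on the right by the image of $z$ forgets the left argument entirely. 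Thus both expressions collapse to $G_{(\alpha\cw\beta)\cw\gamma}$ applied to the image of $z$ alone, giving equality.

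For the converse, I would start with a right normal band $(X,G)$ and produce its strong semilattice decomposition. First I would take the least semilattice congruence $\sim$, which for a right normal band is characterized by~\eqref{eq:sigma}, giving a semilattice $(Y,\cw)$ and a decomposition into classes $(X_\alpha,G_\alpha)$ with $G(X_\alpha\times X_\beta)\subseteq X_{\alpha\cw\beta}$ as in~\eqref{eq:semisem}. The first substantive claim is that each $(X_\alpha,G_\alpha)$ is a right zero semigroup: for $x,y$ in the same class, the characterization~\eqref{eq:sigma} directly yields $G(x,y)=y$, which is exactly the right zero law. Next I would define the connecting homomorphisms: for $\alpha\geqslant\beta$ and $x\in X_\alpha$, I would pick any fixed element $z_\beta\in X_\beta$ and set $\varphi_{\alpha,\beta}(x)=G(x,z_\beta)$, which lies in $X_{\alpha\cw\beta}=X_\beta$. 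I would then need to check that this is well defined (independent of the chosen $z_\beta$), that it is a homomorphism, and that conditions~(a) and~(b) of Definition~\ref{def:strong} hold, using idempotency, associativity, and right normality of $G$.

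The main obstacle I anticipate is verifying condition~(c), namely that $G(x,y)=G_{\alpha\cw\beta}(\varphi_{\alpha,\alpha\cw\beta}(x),\varphi_{\beta,\alpha\cw\beta}(y))$ for $x\in X_\alpha$, $y\in X_\beta$. Since $G_{\alpha\cw\beta}$ is right zero, the right-hand side simplifies to $\varphi_{\beta,\alpha\cw\beta}(y)=G(y,z_{\alpha\cw\beta})$, so the real content is showing $G(x,y)=G(y,z_{\alpha\cw\beta})$. This is where right normality must do the heavy lifting: I would compute $G(x,y)$ and compare it with $G(y,z_{\alpha\cw\beta})$ by massaging the products using~\eqref{eq:sigma}, the inclusion~\eqref{eq:semisem}, and the identity $G(G(x,y),z)=G(G(y,x),z)$. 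The delicate step is establishing that $G(x,y)$ depends only on $y$ and the target class, which amounts to showing that the left argument $x$ contributes nothing beyond its class; this should follow because $G(x,y)$ and $G(z_{\alpha\cw\beta},y)$ lie in the same right zero class and right normality lets one swap and absorb left factors. Once~(c) is in place, the well-definedness of $\varphi_{\alpha,\beta}$ also follows, and the decomposition is complete.
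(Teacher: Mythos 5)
Your first direction (strong semilattice of right zero semigroups $\Rightarrow$ right normal band) is correct, but the converse contains a genuine error: you define the connecting maps with the arguments in the wrong order, and with your definition the construction collapses. In a right normal band, $[x]\geqslant[y]$ implies $G(x,y)=y$: indeed $[x]\geqslant[y]$ means $G(x,y)\sim y$, and \eqref{eq:sigma} then gives $y=G(G(x,y),y)=G(x,G(y,y))=G(x,y)$ (the paper records exactly this equivalence in the paragraph following the proposition). Consequently your map $\varphi_{\alpha,\beta}(x)=G(x,z_\beta)$ is the \emph{constant} map with value $z_\beta$: it depends on the choice of $z_\beta$, it violates condition (a) of Definition \ref{def:strong} whenever $X_\alpha$ has more than one element (since $\varphi_{\alpha,\alpha}(x)=z_\alpha\neq x$), and condition (c) would force $G(x,y)=G(y,z_{\alpha\cw\beta})=z_{\alpha\cw\beta}$ to be constant in $y$, which already fails for a two-element right zero class lying below a singleton class.

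The repair is to use \emph{left} translation by an element of the target class, which is exactly the paper's description after the proposition: $\varphi_{[x]_\sim,[y]_\sim}(z)=G(y,z)$, i.e., $\varphi_{\alpha,\beta}(x)=G(z_\beta,x)$. This is where right normality genuinely enters: if $z\sim z'$ in $X_\beta$, then $z=G(z',z)$ and $z'=G(z,z')$, so $G(z,x)=G(G(z',z),x)=G(G(z,z'),x)=G(z',x)$, proving independence of the base point. Condition (a) is then immediate from \eqref{eq:sigma} (namely $G(z_\alpha,x)=x$ for $x\sim z_\alpha$), condition (b) follows from associativity together with base-point independence, and your anticipated ``delicate step'' in (c) resolves itself: taking $w=G(x,y)\in X_{\alpha\cw\beta}$ as base point, $G(w,y)=G(G(x,y),y)=G(x,G(y,y))=G(x,y)$, so $G(x,y)=\varphi_{\beta,\alpha\cw\beta}(y)$, which is what (c) demands since the right zero structure absorbs $\varphi_{\alpha,\alpha\cw\beta}(x)$. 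The heuristic you inverted: in a \emph{right} normal band it is the left factor that gets forgotten at the bottom, so the structure maps must be left translations; right translations by elements of lower classes are constants.
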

Moreover, the decomposition of a right normal band $(X,G)$ can be given explicitly: the semilattice congruence $\sim$ is defined by \eqref{eq:sigma}. The semilattice $(Y,\cw)$ is then $(X/\sim,\tilde{G})$. We have $[x]_\sim\geqslant[y]_\sim$ if and only if $[G(x,y)]_\sim=[y]_\sim$, which is also equivalent to $G(x,y)=y$ in this particular situation. The subsets $X_\alpha$ in the definition are the equivalences classes (seen as subsets of $X$). The homomorphisms\footnote{Recall that homomorphisms between right zero semigroups are just mappings.} are then defined by $\varphi_{[x]_\sim,[y]_\sim}(z)=G(y,z)$ for every $z\in [x]_\sim$. 

\section{The associated binary band}\label{Sec3}
Throughout this section, we consider a symmetric $n$-ary band $(X,F)$. We associate with it a classical (binary) band and study its most important properties. Let us start with a definition.
\begin{definition}\label{def:binary}
The binary operation $B_F\colon X^2\to X$ associated with $F$ is defined by 
\[B_F(x,y)=F((n-1)\Cdot x,y),\quad x,y\in X.\]
For every $x\in X$, we also define the operation $\l_x^F\colon X\to X$ by 
\[\l_x^F(y)=B_F(x,y), \quad y\in X.\]
\end{definition}
When there is no risk of confusion, we also denote these operations by $B$ and $\l_x$, respectively. We now study elementary properties of these maps.
\begin{proposition}\label{prop:end}
For every $x,x_1,\ldots,x_n\in X$ we have $\l_x^2=\l_x$ and 
\begin{equation}\label{eq1}\l_x(F(x_1,\ldots,x_n))=F(x_1,\ldots,\l_x(x_i),\ldots,x_n),\qquad i\in\{1,\ldots,n\}.\end{equation}
Moreover, the map $\l_x$ is an endomorphism on $(X,F)$.
\end{proposition}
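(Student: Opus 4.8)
The plan is to work throughout with the extension $F^2$ from \eqref{eq:extension}, exploiting two of its features: it is associative, hence insensitive to the position at which its single inner application of $F$ is inserted, and it is symmetric by \cite[Lemma 2.10]{LehPil14}. Idempotency of $F$ enters only in the first identity. The strategy is to flatten every nested expression into an $F^2$-term and then to match arguments as multisets.

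To prove $\l_x^2=\l_x$, I would flatten the nested expression. By definition $\l_x^2(y)=F((n-1)\Cdot x,F((n-1)\Cdot x,y))$, which is exactly $F^2((2n-2)\Cdot x,y)$. Since $2n-2\geq n$ for $n\geq 2$, associativity lets me instead regroup the first $n$ arguments, all equal to $x$, into one inner application of $F$; idempotency gives $F(n\Cdot x)=x$, and the remaining evaluation is $F((n-1)\Cdot x,y)=\l_x(y)$, as desired.

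For \eqref{eq1}, the idea is that both sides flatten to $F^2$ evaluated on the same multiset of arguments. On the left, $\l_x(F(x_1,\ldots,x_n))=F((n-1)\Cdot x,F(x_1,\ldots,x_n))=F^2((n-1)\Cdot x,x_1,\ldots,x_n)$. On the right, the inner application $\l_x(x_i)=F((n-1)\Cdot x,x_i)$ occupies position $i$, so by associativity the expression equals $F^2(x_1,\ldots,x_{i-1},(n-1)\Cdot x,x_i,\ldots,x_n)$. Both $F^2$-terms carry the multiset consisting of $x_1,\ldots,x_n$ together with $n-1$ copies of $x$, so symmetry of $F^2$ forces them to agree.

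The endomorphism property then follows from the first two parts by a short induction on $k$ establishing that
\[
F(\l_x(x_1),\ldots,\l_x(x_k),x_{k+1},\ldots,x_n)=\l_x(F(x_1,\ldots,x_n)),\qquad 1\leq k\leq n.
\]
The base case $k=1$ is \eqref{eq1} with $i=1$. For the step I apply $\l_x$ to both sides of the hypothesis: by \eqref{eq1} with $i=k+1$ the left-hand side becomes $F(\l_x(x_1),\ldots,\l_x(x_{k+1}),x_{k+2},\ldots,x_n)$, while the right-hand side collapses via $\l_x^2=\l_x$ to $\l_x(F(x_1,\ldots,x_n))$. Taking $k=n$ is precisely the assertion that $\l_x$ is an endomorphism. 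The computations are routine; the only point demanding care is the bookkeeping behind the flattening, namely being explicit that associativity renders $F^2$ independent of where its inner $F$ sits, which is exactly what legitimizes using the symmetry of $F^2$ to rearrange arguments. This, rather than any single equation, is the main thing to get right.
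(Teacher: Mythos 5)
Your proposal is correct and takes essentially the same route as the paper: the same regrouping argument for $\l_x^2=\l_x$, the same flattening-by-associativity for \eqref{eq1} (the paper proves the case $i=1$ and gets the other positions from the symmetry of $F$ itself, where you invoke the symmetry of $F^2$ --- a cosmetic variant, since the paper cites that lemma anyway), and the same derivation of the endomorphism property, your induction on $k$ being just an explicit rewriting of the paper's one-line computation $\l_x(F(x_1,\ldots,x_n))=\l_x^n(F(x_1,\ldots,x_n))=F(\l_x(x_1),\ldots,\l_x(x_n))$. No gaps; the bookkeeping point you flag (associativity makes $F^2$ insensitive to where the inner $F$ sits) is exactly the content of \eqref{assoc} and is handled identically in the paper.
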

\begin{proof}
For every $x,y\in X$ we have
\[\l_x^2(y)=F((n-1)\Cdot x,\l_x(y))=F((n-1)\Cdot x,F((n-1)\Cdot x,y)).\]
By the associativity and idempotency of $F$, this expression is equal to \[F(F(n\Cdot x),(n-2)\Cdot x, y)=F((n-1)\Cdot x,y)=\l_x(y).\]
Let us show that \eqref{eq1} holds for $i=1$. The other cases are obtained from the symmetry of $F$. Using the definition of $\l_x$ and the associativity of $F$ we have 
\begin{eqnarray*}
\l_x(F(x_1,\ldots,x_n)) &=& F((n-1)\Cdot x,F(x_1,\ldots,x_n)) \\
&=& F(F((n-1)\Cdot x,x_1),x_2,\ldots,x_n)= F(\l_x(x_1),x_2,\ldots,x_n). 
\end{eqnarray*}
Finally, using \eqref{eq1} and $\l_x^2=\l_x$, we have
\[\l_x(F(x_1,\ldots,x_n))=\l_x^n(F(x_1,\ldots,x_n))=F(\l_x(x_1),\ldots,\l_x(x_n)),\]
which concludes the proof.
\end{proof}
From the idempotency of $F$ we derive $\l_x(x)=x$, for every $x\in X$. We then obtain the following corollary.
\begin{corollary}\label{cor:proj}
For every $x_1,\ldots,x_n\in X$ we have
\[F(x_1,\ldots,x_n)=F(\l_{F(x_1,\ldots,x_n)}(x_1),\ldots,\l_{F(x_1,\ldots,x_n)}(x_n)).\]
\end{corollary}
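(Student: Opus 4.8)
The plan is to deduce this directly from Proposition~\ref{prop:end} together with the identity $\l_x(x)=x$ noted just above the statement. Writing $z=F(x_1,\ldots,x_n)$ for brevity, the whole point is that the map $\l_z$ can be pushed through $F$.

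First I would record that $\l_z$ is an endomorphism on $(X,F)$, which is the last assertion of Proposition~\ref{prop:end}. Applying this endomorphism property with the arguments $x_1,\ldots,x_n$ gives
\[
F(\l_z(x_1),\ldots,\l_z(x_n)) ~=~ \l_z(F(x_1,\ldots,x_n)) ~=~ \l_z(z).
\]
Second, I would invoke $\l_z(z)=z$, which is the consequence of idempotency recorded just before the corollary; explicitly $\l_z(z)=F((n-1)\Cdot z,z)=F(n\Cdot z)=z$. Combining the two equalities yields $F(\l_z(x_1),\ldots,\l_z(x_n))=\l_z(z)=z=F(x_1,\ldots,x_n)$, which is exactly the claimed identity.

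There is essentially no obstacle here, since all the real work was carried out in Proposition~\ref{prop:end}. The only subtlety worth flagging is notational: the endomorphism being applied is indexed by the very value $z=F(x_1,\ldots,x_n)$, so that after moving $\l_z$ inside $F$ one lands precisely on $\l_z$ evaluated at its own index $z$, where the idempotency identity $\l_z(z)=z$ closes the argument. In particular no fresh associativity computation is needed beyond what Proposition~\ref{prop:end} already supplies.
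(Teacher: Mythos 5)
Your proof is correct and follows exactly the route the paper intends: the corollary is stated as an immediate consequence of Proposition~\ref{prop:end} (that $\l_z$ is an endomorphism of $(X,F)$) combined with the idempotency identity $\l_z(z)=z$ recorded just before it, with $z=F(x_1,\ldots,x_n)$. Nothing is missing.
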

We now show that the groupoid $(X,B)$ associated with $(X,F)$ is a right normal band.
\begin{proposition}\label{prop:impo}
We have 
\[\l_x\circ \l_y=\l_y\circ \l_x=\l_{\l_x(y)}=\l_{\l_y(x)}, \quad x,y\in X.\]
In other words, the pair $(X,B)$ is a right normal band. 
\end{proposition}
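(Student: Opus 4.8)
The plan is to reduce the four-fold equality to two ``generating'' identities, namely $\l_x\circ\l_y=\l_y\circ\l_x$ and $\l_{\l_x(y)}=\l_x\circ\l_y$, deduce the remaining equalities formally, and then read off the band axioms for $(X,B)$. Throughout I would only use the three facts packaged in Proposition \ref{prop:end}: the relation \eqref{eq1}, the idempotence $\l_x^2=\l_x$, and the fact that each $\l_x$ is an endomorphism of $(X,F)$; together with $\l_x(x)=x$.

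\emph{Commutativity of the maps $\l_x$.} First I would prove $\l_x\circ\l_y=\l_y\circ\l_x$. Evaluating at an arbitrary $z\in X$ and writing $\l_y(z)=F((n-1)\Cdot y,z)$, I would apply \eqref{eq1} to slide $\l_x$ onto the last coordinate, obtaining $\l_x(\l_y(z))=\l_x(F((n-1)\Cdot y,z))=F((n-1)\Cdot y,\l_x(z))$. The right-hand side is exactly $\l_y(\l_x(z))$, so $(\l_x\circ\l_y)(z)=(\l_y\circ\l_x)(z)$. This step is short and uses only \eqref{eq1}.

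\emph{The identity $\l_{\l_x(y)}=\l_x\circ\l_y$.} Since $\l_x$ is an endomorphism, I get $(\l_x\circ\l_y)(z)=\l_x(F((n-1)\Cdot y,z))=F((n-1)\Cdot \l_x(y),\l_x(z))$. Comparing this with $\l_{\l_x(y)}(z)=F((n-1)\Cdot \l_x(y),z)$, the whole identity reduces to the single claim that $z$ may be replaced by $\l_x(z)$ in the presence of $n-1$ copies of $\l_x(y)$, that is, $F((n-1)\Cdot \l_x(y),\l_x(z))=F((n-1)\Cdot \l_x(y),z)$. To prove this I would set $w=F((n-1)\Cdot \l_x(y),z)$ and compute $\l_x(w)$ in two ways using \eqref{eq1}: applied to the coordinate carrying $z$ it gives $\l_x(w)=F((n-1)\Cdot \l_x(y),\l_x(z))$, whereas applied to a coordinate carrying $\l_x(y)$ it gives $\l_x(w)=F(\ldots,\l_x(\l_x(y)),\ldots)=w$, because $\l_x^2=\l_x$ forces $\l_x(\l_x(y))=\l_x(y)$. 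Equating the two expressions for $\l_x(w)$ yields the claim, hence $\l_{\l_x(y)}=\l_x\circ\l_y$.

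\emph{Conclusion.} The two remaining equalities then follow formally: $\l_{\l_y(x)}=\l_y\circ\l_x$ is the previous identity with $x$ and $y$ interchanged, and combining it with commutativity gives $\l_{\l_x(y)}=\l_x\circ\l_y=\l_y\circ\l_x=\l_{\l_y(x)}$. Finally I would read off the structure of $(X,B)$: idempotence is $B(x,x)=\l_x(x)=x$; associativity $B(B(x,y),z)=B(x,B(y,z))$ is exactly $\l_{\l_x(y)}=\l_x\circ\l_y$, so $(X,B)$ is a band; and right normality $B(B(x,y),z)=B(B(y,x),z)$ is exactly $\l_{\l_x(y)}=\l_{\l_y(x)}$. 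I expect the only real obstacle to be the identity $\l_{\l_x(y)}=\l_x\circ\l_y$, specifically the removal of the spurious $\l_x$ sitting on the $z$-argument; the clean resolution is the two-way computation of $\l_x(w)$ above, which sidesteps any lengthy flattening of iterated extensions $F^q$ and relies purely on \eqref{eq1} and the idempotence $\l_x^2=\l_x$.
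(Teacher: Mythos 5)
Your proof is correct, and its skeleton matches the paper's: commutativity of the maps $\l_x$ via \eqref{eq1}, then the identity $\l_{\l_x(y)}=\l_x\circ\l_y$, then the band axioms read off formally. The divergence is exactly in the step you flag as the crux. The paper proves $\l_x\circ\l_y=\l_{\l_x(y)}$ in one line: since $\l_x^2=\l_x$ one may write $\l_x=\l_x^{n-1}$, and applying \eqref{eq1} $n-1$ times pushes each copy of $\l_x$ onto a distinct one of the $n-1$ coordinates carrying $y$ in $F((n-1)\Cdot y,z)$, leaving $z$ untouched and yielding $F((n-1)\Cdot \l_x(y),z)=\l_{\l_x(y)}(z)$ directly. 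You instead apply the endomorphism property, which deposits an unwanted $\l_x$ on the $z$-slot, and then strip it off by computing $\l_x(w)$, for $w=F((n-1)\Cdot \l_x(y),z)$, in two ways: applying \eqref{eq1} on a slot carrying $\l_x(y)$ and using $\l_x\circ\l_x=\l_x$ gives $\l_x(w)=w$, while applying it on the $z$-slot gives $\l_x(w)=F((n-1)\Cdot \l_x(y),\l_x(z))$; both applications of \eqref{eq1} are legitimate, so the cancellation is sound. The paper's route is shorter and stays closer to first principles (note that the endomorphism property you invoke was itself obtained in Proposition \ref{prop:end} by the same device $\l_x=\l_x^n$, so your argument secretly routes through that computation anyway); yours buys a small reusable absorption lemma---$\l_x$ fixes any value of $F$ having $\l_x(y)$ among its arguments---which is a pleasant observation in its own right. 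One minor correction to your closing remark: the paper's proof of this proposition involves no flattening of iterated extensions $F^q$ either; that device appears only later, in the proof of Proposition \ref{prop:impo3}.
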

\begin{proof}
For every $x,y,z\in X$ we use \eqref{eq1} to obtain
\[\l_x(\l_y(z))=\l_x(F((n-1)\Cdot y,z))=F((n-1)\Cdot y,\l_x(z))=\l_y(\l_x(z)).\]
The same relation (applied $n-1$ times) yields
\[\l_x(\l_y(z))=\l_x(F((n-1)\Cdot y,z))=\l_x^{n-1}(F((n-1)\Cdot y,z))=F((n-1)\Cdot \l_x(y),z)=\l_{\l_x(y)}(z).\]
The last relation is obtained by exchanging the roles of $x$ and $y$.

Expressing these conditions for $B$, we have 
\[B(B(x,y),z)=\l_{\l_x(y)}(z)=\l_x(\l_y(z))=B(x,B(y,z)),\quad x,y,z\in X,\] so $B$ is associative. Moreover, $B(x,x)=F(n\Cdot x)=x$ for any $x\in X$, so $(X,B)$ is a band. Finally, 
\[B(B(x,y),z)=\l_x(\l_y(z))=\l_y(\l_x(z))=B(B(y,x),z),\quad x,y,z\in X,\] and $B$ is a right normal band.
\end{proof}
We derive the following direct corollary. 
\begin{corollary}\label{cor:semi}
The pair $(\{\l_x\colon x\in X\},\circ)$ is a semilattice.
\end{corollary}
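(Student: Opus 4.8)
The plan is to recognize that a semilattice is precisely a commutative idempotent semigroup, so I need to verify four things about $(\{\l_x\colon x\in X\},\circ)$: that it is closed under $\circ$, that $\circ$ is associative, commutative, and idempotent. All four will follow immediately from the two preceding propositions, so the work is mostly bookkeeping.

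First I would check closure, which is the only point that requires a genuine (if trivial) observation. By Proposition \ref{prop:impo} we have $\l_x\circ\l_y=\l_{\l_x(y)}$ for all $x,y\in X$. Since $\l_x(y)$ is again an element of $X$, the composite $\l_x\circ\l_y$ is again of the form $\l_z$ with $z=\l_x(y)$, so the set $\{\l_x\colon x\in X\}$ is stable under composition and $(\{\l_x\colon x\in X\},\circ)$ is a well-defined groupoid.

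Next I would dispatch the three algebraic axioms. Associativity of $\circ$ is automatic, since composition of maps from $X$ to $X$ is always associative. Commutativity is exactly the identity $\l_x\circ\l_y=\l_y\circ\l_x$ established in Proposition \ref{prop:impo}. Idempotency is the relation $\l_x^2=\l_x$ from Proposition \ref{prop:end}. Combining these, $(\{\l_x\colon x\in X\},\circ)$ is a commutative idempotent semigroup, i.e., a semilattice, which concludes the argument.

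I do not expect any real obstacle here: the corollary is a direct consequence of Propositions \ref{prop:end} and \ref{prop:impo}, and the only step that is not completely formal is noting closure under $\circ$, which the formula $\l_x\circ\l_y=\l_{\l_x(y)}$ supplies at once.
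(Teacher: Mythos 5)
Your proof is correct and matches the argument the paper intends: the corollary is stated there without proof as a ``direct corollary'' of Proposition \ref{prop:impo}, and the verification you spell out (closure via $\l_x\circ\l_y=\l_{\l_x(y)}$, commutativity from the same proposition, idempotency $\l_x^2=\l_x$ from Proposition \ref{prop:end}, and associativity of composition) is exactly the implicit reasoning. Nothing is missing.
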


\begin{example}\label{ex:2.6}The binary bands associated with the ternary bands $(X,F_1)$ and $(X,F_2)$ defined in Example \ref{ex:1.1}   are given by the following tables:
\begin{center}
\begin{tabular}{l@{\hspace{2cm}}r}
$\begin{array}{c|cccc}
 B_{F_1}&1&2&3&4\\\hline
 1&1&3&3&4\\
 2&4&2&3&4\\
 3&4&3&3&4\\
 4&4&3&3&4
\end{array}$
 &
$ \begin{array}{c|cccc}
 B_{F_2}&1&2&3&4\\\hline
 1&1&4&3&4\\
 2&4&2&3&4\\
 3&4&4&3&4\\
 4&4&4&3&4
\end{array}$

\end{tabular}
\end{center} 
\end{example}
The semilattice defined in Corollary \ref{cor:semi} can be extended to define a symmetric $n$-ary band. The following result establishes a tight relation between $(X,F)$ and this $n$-ary band.
\begin{proposition}\label{prop:impo3}
For every $x_1,\ldots,x_n\in X$ we have 
\[\l_{F(x_1,\ldots,x_n)}=\l_{x_1}\circ \cdots \circ \l_{x_n}.\] 
\end{proposition}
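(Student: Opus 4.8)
The plan is to establish the equality of the two self-maps by evaluating both at an arbitrary $z\in X$ and showing that each side equals the single iterated operation $F^{n}((n-1)\Cdot x_1,\ldots,(n-1)\Cdot x_n,z)$, where $F^{n}$ is the extension of $F$ defined in \eqref{eq:extension}. I will use repeatedly that $F^{q}$ is symmetric whenever $F$ is, as recalled just after \eqref{eq:extension}.

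First I would rewrite the left-hand composition as an extension. I claim that for every $1\leq k\leq n$ and every $w\in X$,
\[
\l_{x_1}\circ\cdots\circ\l_{x_k}(w)=F^{k}((n-1)\Cdot x_1,\ldots,(n-1)\Cdot x_k,w),
\]
which I prove by induction on $k$. The case $k=1$ is the definition of $\l_{x_1}$. For the step, I write $\l_{x_1}\circ\cdots\circ\l_{x_k}(w)=\l_{x_1}\circ\cdots\circ\l_{x_{k-1}}(\l_{x_k}(w))$ and apply the induction hypothesis with last entry $\l_{x_k}(w)=F((n-1)\Cdot x_k,w)$; since this inner block occupies the last $n$ coordinates, the recursive rule \eqref{eq:extension} identifies the result with $F^{k}((n-1)\Cdot x_1,\ldots,(n-1)\Cdot x_k,w)$. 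Taking $k=n$ and $w=z$ gives the desired expression for $\l_{x_1}\circ\cdots\circ\l_{x_n}(z)$.

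It remains to show that $\l_{F(x_1,\ldots,x_n)}(z)=F((n-1)\Cdot F(x_1,\ldots,x_n),z)$ equals the same thing. The obstacle is that expanding the factors $F(x_1,\ldots,x_n)$ yields a bracketing grouped block by block, which differs from the right-nested tower defining $F^{n}$; matching them directly would call for the general associative law. I would sidestep this by expanding the $n-1$ copies of $F(x_1,\ldots,x_n)$ one at a time, each time first using symmetry to bring the copy to be expanded into the last $n$ coordinates and only then applying \eqref{eq:extension}. Formally, one proves by induction on $j\in\{0,\ldots,n-1\}$ that
\[
F((n-1)\Cdot F(x_1,\ldots,x_n),z)=F^{1+j}\bigl(z,(n-1-j)\Cdot F(x_1,\ldots,x_n),\underbrace{x_1,\ldots,x_n,\ldots,x_1,\ldots,x_n}_{j\text{ blocks}}\bigr),
\]
the base case $j=0$ being the symmetry of $F$. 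In the inductive step, symmetry of $F^{1+j}$ moves one remaining factor to the end and \eqref{eq:extension} absorbs its entries $x_1,\ldots,x_n$ while raising the exponent to $2+j$. At $j=n-1$ no factor is left, and a last application of the symmetry of $F^{n}$ rearranges the arguments into $F^{n}((n-1)\Cdot x_1,\ldots,(n-1)\Cdot x_n,z)$.

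Comparing the two computations gives $\l_{F(x_1,\ldots,x_n)}(z)=\l_{x_1}\circ\cdots\circ\l_{x_n}(z)$ for every $z$, hence the equality of maps. The single genuine difficulty is the bracketing mismatch in the second step; confining every expansion to the last $n$ coordinates keeps the argument inside the recursive definition of $F^{q}$ and lets symmetry carry the rest, so that the general associativity of $F$ is never invoked.
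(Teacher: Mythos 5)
Your proof is correct and takes essentially the same route as the paper's: both identify $(\l_{x_1}\circ\cdots\circ\l_{x_n})(t)$ and $\l_{F(x_1,\ldots,x_n)}(t)$ with the common expression $F^n((n-1)\Cdot x_1,\ldots,(n-1)\Cdot x_n,t)$, using the recursive definition \eqref{eq:extension} together with the symmetry of $F^n$. The only difference is presentational: you spell out, via two explicit inductions that confine every expansion to the last $n$ coordinates, the regrouping that the paper compresses into the single phrase ``by the symmetry of $F^n$'' (where it implicitly appeals to general associativity).
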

\begin{proof}
We use the $n$-ary extension $F^n$ of $F$ defined by \eqref{eq:extension} to compute the right hand side. By the symmetry of $F^n$ we have for any $t\in X$,
\begin{multline*}
(\l_{x_1}\circ \cdots \circ \l_{x_n})(t) ~=~ F^n((n-1)\Cdot x_1,\ldots,(n-1)\Cdot x_n,t) \\
=F((n-1)\Cdot F(x_1,\ldots,x_n),t) ~=~ \l_{F(x_1,\ldots,x_n)}(t),
\end{multline*}
which completes the proof. 
\end{proof}
The next result characterizes the reducibility of a symmetric $n$-ary band to a symmetric (binary) band, i.e., a semilattice. In order to state them, we consider the map\footnote{Here $T_X$ denotes the full transformation monoid of $X$, i.e., the semigroup of all maps from $X$ to $X$ endowed with the composition of maps.} $\l\colon X\to T_X$ defined by $\l(x)=\l_x$, for every $x\in X$. 
\begin{proposition}\label{prop:propinj1}
Let $(X,F)$ be a symmetric $n$-ary band. The following assertions are equivalent.
\begin{enumerate}
\item[(i)] The map $\l$ is injective.
\item[(ii)] The $n$-ary band $(X,F)$ is (isomorphic to) the $n$-ary extension of a semilattice.
\item[(iii)] The band $(X,B)$ is commutative.
\item[(iv)] The $n$-ary band $(X,F)$ is the $n$-ary extension of $(X,B)$.
\end{enumerate}
\end{proposition}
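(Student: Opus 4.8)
The plan is to single out assertion~(iii)---commutativity of $B$---as the algebraic pivot, to prove its equivalence with the injectivity of $\l$ in~(i) by a direct computation, and then to reach (ii) and (iv) from (i), closing the loop by checking that each of (ii) and (iv) in turn forces (iii). Concretely, I aim to establish $(i)\Leftrightarrow(iii)$ together with the two chains $(i)\Rightarrow(ii)\Rightarrow(iii)$ and $(i)\Rightarrow(iv)\Rightarrow(iii)$; since (iii) returns to (i), this renders all four assertions equivalent. The whole argument leans on Proposition~\ref{prop:impo}, Proposition~\ref{prop:impo3} and Corollary~\ref{cor:semi}, which already package most of what is needed.

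For $(i)\Leftrightarrow(iii)$ the key observation is that $\l$ always identifies $B(x,y)$ with $B(y,x)$. Unwinding Proposition~\ref{prop:impo}, I would write $\l_{B(x,y)}=\l_{\l_x(y)}=\l_x\circ\l_y=\l_y\circ\l_x=\l_{\l_y(x)}=\l_{B(y,x)}$, so that $\l(B(x,y))=\l(B(y,x))$ for all $x,y\in X$. If $\l$ is injective this forces $B(x,y)=B(y,x)$, i.e.\ (iii). Conversely, if $\l_x=\l_y$, then evaluating at $y$ and at $x$ and using $\l_z(z)=z$ gives $B(x,y)=\l_x(y)=\l_y(y)=y$ and $B(y,x)=\l_y(x)=\l_x(x)=x$; commutativity of $B$ then yields $x=y$, so $\l$ is injective. (One may note in passing that $\l_x=\l_y$ is exactly the relation~\eqref{eq:sigma}, so injectivity of $\l$ means that the least semilattice congruence is trivial.)

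The remaining implications exploit that $\l$ is a homomorphism onto a semilattice. By Corollary~\ref{cor:semi}, $(\{\l_x\colon x\in X\},\circ)$ is a semilattice, and Proposition~\ref{prop:impo3} says precisely that $\l(F(x_1,\ldots,x_n))=\l_{x_1}\circ\cdots\circ\l_{x_n}$, i.e.\ that $\l$ is a homomorphism from $(X,F)$ onto the $n$-ary extension of that semilattice; if $\l$ is injective it is thus an isomorphism onto this extension, giving (ii). For $(ii)\Rightarrow(iii)$ I would transport commutativity through an isomorphism $\psi$ onto the $n$-ary extension of a semilattice $(S,\cw)$: idempotency and commutativity in $S$ give $\psi(B(x,y))=\psi(x)\cw\psi(y)=\psi(B(y,x))$, and injectivity of $\psi$ yields (iii). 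For $(i)\Rightarrow(iv)$ I would compute the binary extension directly: unfolding~\eqref{eq:extension} for $B$ and using $\l_{x_n}(x_n)=x_n$ together with Proposition~\ref{prop:impo3} gives $B^{n-1}(x_1,\ldots,x_n)=(\l_{x_1}\circ\cdots\circ\l_{x_{n-1}})(x_n)=\l_{F(x_1,\ldots,x_n)}(x_n)=B(z,x_n)$ with $z=F(x_1,\ldots,x_n)$. Since the $\l$'s are idempotent, $\l_z\circ\l_{x_n}=\l_z$, whence $\l_{B(z,x_n)}=\l_z$ by Proposition~\ref{prop:impo}; injectivity of $\l$ then forces $B(z,x_n)=z$, i.e.\ $B^{n-1}=F$. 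Finally $(iv)\Rightarrow(iii)$ follows by feeding the symmetric operation $F=B^{n-1}$ the tuples $(x,(n-1)\Cdot y)$ and $((n-1)\Cdot y,x)$: the same unfolding of~\eqref{eq:extension} collapses these to $B(x,y)$ and $B(y,x)$, whose equality is forced by the symmetry of $F$.

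The routine parts are the repeated appeals to the idempotency $\l_x^2=\l_x$ of Proposition~\ref{prop:end} and the bookkeeping in unfolding~\eqref{eq:extension}. The one step carrying the real content, and which I expect to be the main obstacle, is $(i)/(iii)\Rightarrow(iv)$: showing that $B^{n-1}$ reproduces $F$ \emph{exactly} rather than merely up to the semilattice congruence. This is where injectivity of $\l$ is indispensable---it is precisely what upgrades the identity $\l_{B(z,x_n)}=\l_z$, valid in \emph{every} symmetric $n$-ary band, to the pointwise equality $B(z,x_n)=z$ needed for reducibility.
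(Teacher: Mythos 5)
Your proof is correct: each step checks out against Propositions \ref{prop:end}, \ref{prop:impo}, \ref{prop:impo3} and Corollary \ref{cor:semi}, and your implication graph ((i)$\Leftrightarrow$(iii), (i)$\Rightarrow$(ii)$\Rightarrow$(iii), (i)$\Rightarrow$(iv)$\Rightarrow$(iii)) does close. The route differs from the paper's in how (iv) is reached. The paper proves (i)$\Rightarrow$(ii) exactly as you do, but then harvests \emph{both} (iii) and (iv) from (ii) with a one-line computation: if $F$ is the $n$-ary extension of a semilattice $(X,\cw)$, then $B(x,y)=F((n-1)\Cdot x,y)=x\cw\cdots\cw x\cw y=x\cw y$, so the semilattice is forced to be $(X,B)$ itself, and commutativity of $B$ together with $F=B^{n-1}$ come for free (the ``isomorphic to'' case is absorbed by the paper's opening remark that the property is isomorphism-invariant). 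You instead prove (i)$\Rightarrow$(iv) head-on, and this is the genuinely different part: the identity $B^{n-1}(x_1,\ldots,x_n)=(\l_{x_1}\circ\cdots\circ\l_{x_{n-1}})(x_n)=\l_{F(x_1,\ldots,x_n)}(x_n)=B(z,x_n)$ with $z=F(x_1,\ldots,x_n)$ holds in \emph{every} symmetric $n$-ary band, and injectivity of $\l$ upgrades $\l_{B(z,x_n)}=\l_z$ (which indeed needs only $\l_{x_n}^2=\l_{x_n}$, since $\l_{x_n}$ sits rightmost in $\l_z=\l_{x_1}\circ\cdots\circ\l_{x_n}$) to the pointwise equality $B(z,x_n)=z$. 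This costs more than the paper's shortcut but buys a structural insight the paper only makes explicit later: $\l\circ B^{n-1}=\l\circ F$ always, i.e., the $n$-ary extension of $B$ agrees with $F$ modulo the congruence $\sigma$ of Section \ref{Sec4} (cf.\ Proposition \ref{prop:sigma1}), and injectivity of $\l$ is exactly what collapses $\sigma$ to equality. Likewise your direct (i)$\Leftrightarrow$(iii) via $\l_{B(x,y)}=\l_x\circ\l_y=\l_y\circ\l_x=\l_{B(y,x)}$ is a clean alternative to the paper's detour through (ii); your (iii)$\Rightarrow$(i) and (iv)$\Rightarrow$(iii) coincide with the paper's arguments.
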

\begin{proof}
Let us first show that (i) implies (ii). First remark that the property of being the $n$-ary extension of a semilattice is preserved by isomorphisms. Now, if $\l$ is injective, then it induces a bijection from $X$ to $\{\l_x\colon x\in X\}$. Also, Proposition \ref{prop:impo3} shows that $\l$ is an isomorphism from $(X,F)$ to the $n$-ary extension of the semigroup $(\{\l_x\colon x\in X\},\circ)$, which is a semilattice by Corollary \ref{cor:semi}. 
    Let us show that (ii) implies conditions (iii) and (iv). Assume that $(X,F)$ is the $n$-ary extension of a semilattice $(X,\cw)$. Then for any $x,y\in X$ we have
\[
B(x,y) = F((n-1)\Cdot x,y) = x\cw\cdots\cw x\cw y=x\cw y
\]
which shows that $B$ is commutative and that $(X,F)$ is the $n$-ary extension of $(X,B)$. Let us show that (iii) implies (i). So, assume that $B$ is commutative and that $\l_x=\l_y$ for some $x,y\in X$. Then we have $B(x,z)=B(y,z)$ for every $z\in X$, and thus
\[x=B(x,x)=B(y,x)=B(x,y)=B(y,y)=y,\]
which shows that $\l$ is injective. Finally, let us show that (iv) implies (iii).  If $F$ is the $n$-ary extension of $B$, since $B$ is idempotent we have for any $x,y\in X$,
\[F(y,(n-1)\Cdot x)=B(y,x)\quad\mbox{and}\quad F((n-1)\Cdot x,y)=B(x,y).\]
Since $F$ is symmetric, both expressions are equal and thus $(X,B)$ is commutative.
\end{proof}
\begin{remark}
 It follows from the proof of Proposition \ref{prop:propinj1} that if $(X,F)$ is the $n$-ary extension of a semilattice, then this semilattice is $(X,B)$.
\end{remark}
In the same spirit, the map $\l$ also enables us to characterize another class of symmetric $n$-ary bands. We recall that a group $(X,\ast)$ with neutral element $e$ has \emph{bounded exponent} if there exists an integer $m\geq 1$ such that the $m$-fold product $x\ast\cdots\ast x$ is equal to $e$ for any $x\in X$. In that case, the exponent of the group is the smallest integer having this property.
\begin{proposition}\label{prop:group1}
Let $(X,F)$ be a symmetric $n$-ary band. The following conditions are equivalent.
\begin{enumerate}
\item[(i)] The map $\l$ is constant (i.e., $\l_x$ is the identity map of $X$ for any $x\in X$).
 \item[(ii)] The band $(X,F)$ is the $n$-ary extension of an Abelian group $(X,\ast)$ (and in particular the exponent of $(X,\ast)$ divides $n-1$).
 \item[(iii)] The band $(X,B)$ is a right zero semigroup.
\end{enumerate}
\end{proposition}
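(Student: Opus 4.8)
The plan is to treat the cheap equivalences first and then isolate the single genuine construction. The implication (i)$\Leftrightarrow$(iii) I would dispatch immediately from the definitions: since $B(x,y)=\l_x(y)$, the condition that $(X,B)$ be a right zero semigroup, namely $B(x,y)=y$ for all $x,y$, is literally the statement that $\l_x$ is the identity for every $x$. Moreover, if $\l$ is merely constant then, because $\l_x(x)=B(x,x)=F(n\Cdot x)=x$ by idempotency, the common value of $\l_x$ sends every $x$ to $x$ and hence is the identity; so ``constant'' and ``identically the identity'' coincide, matching the parenthetical claim in (i). For (ii)$\Rightarrow$(i), I would write $F$ as the $n$-ary extension of an Abelian group $(X,\ast)$ with neutral element $e$. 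Idempotency of $F$ gives $x\ast\cdots\ast x=x$ ($n$ factors), hence $x^{n-1}=e$, which already shows the exponent divides $n-1$; then $\l_x(y)=F((n-1)\Cdot x,y)=x^{n-1}\ast y=y$, so $\l_x$ is the identity and (i) holds.

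The substance is (i)$\Rightarrow$(ii). The first observation I would record is that, under (i), \emph{every} element of $X$ is neutral for $F$: for any $e$ and any $k$, the symmetry of $F$ lets me gather the arguments so that $F((k-1)\Cdot e,x,(n-k)\Cdot e)=F((n-1)\Cdot e,x)=\l_e(x)=x$. Hence $E_F=X$, and in particular $F$ has a neutral element. Fixing any $e\in X$, I would then invoke the result of \cite{DudMuk06} recalled around \eqref{eq:dud} to conclude that $F$ is reducible to the associative binary operation $G_e(x,y)=F(x,(n-2)\Cdot e,y)$, that is, $F=G_e^{\,n-1}$. Two of the group axioms for $(X,G_e)$ are then immediate: commutativity follows at once from the symmetry of $F$, and $e$ is a two-sided identity since $G_e(x,e)=F(x,(n-1)\Cdot e)=x$ and $G_e(e,x)=F((n-1)\Cdot e,x)=\l_e(x)=x$, using the neutrality of $e$.

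It remains to produce inverses, which I expect to be the only real obstacle: in a commutative monoid one cannot cancel in general, so the bare identity $x^n=x$ coming from idempotency of $F$ does not by itself yield a group. The resolution I would use is that hypothesis (i) supplies exactly the missing information for free. Evaluating $\l_x$ at $e$ gives $e=\l_x(e)=F((n-1)\Cdot x,e)$, and in $(X,G_e)$ the right-hand side is precisely the $(n-1)$-st power $x^{n-1}$ of $x$; thus $x^{n-1}=e$ for every $x$. Consequently $x^{n-2}$ is an inverse of $x$, so $(X,G_e)$ is an Abelian group whose exponent divides $n-1$. Since $F=G_e^{\,n-1}$ is its $n$-ary extension, this is exactly assertion (ii), and the proof is complete.
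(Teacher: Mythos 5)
Your proof is correct and follows essentially the same route as the paper: both dispatch (i)$\Leftrightarrow$(iii) and (ii)$\Rightarrow$(i) by direct computation, and both prove (i)$\Rightarrow$(ii) by observing that symmetry plus (i) makes every element of $X$ neutral for $F$ and then reducing via $G_e$ from \eqref{eq:dud}. The only difference is that where the paper outsources the verification that $(X,G_e)$ is an Abelian group of exponent dividing $n-1$ to \cite[Theorem 1.3]{CouDevMarMat19}, you prove it inline (commutativity from symmetry, identity from neutrality, and inverses from $x^{n-1}=\l_x(e)=e$), which makes the argument self-contained.
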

\begin{proof}
Let us first analyze conditions (i) and (ii). If $\l$ is constant, then we have $\l_x(y)=\l_y(y)=y$ for every $x,y\in X$, and thus $\l_x$ is the identity map of $X$ for every $x$. If $(X,F)$ is the $n$-ary extension of an Abelian group $(X,\ast)$, then
we have $x=F(n\Cdot x)=x\ast\cdots\ast x$ so the exponent of $(X,\ast)$ must divide $n-1$.

The equivalence of conditions (i) and (iii) follows directly from the definitions.

Let us show that (i) implies (ii). Using (i) and the symmetry of $F$, we obtain that every element $x\in X$ is neutral for $F$ (see Section \ref{Sec2.1}). It is not difficult to prove that the reduction of $F$ with respect to any neutral element defines an Abelian group whose exponent divides $n-1$. This was done in detail in \cite[Theorem 1.3]{CouDevMarMat19} for $n\geqslant 3$ and is obvious for $n=2$.
Finally, we show that (ii) implies (i). If $F$ is the $n$-ary extension of $(X,\ast)$, then 
\[\l_x(y)=F((n-1)\Cdot x,y)= x\ast\cdots\ast x\ast y=y,\]
so $\l_x$ is the identity map, and (i) holds true. 
\end{proof}
\section{Semilattice decomposition and induced group structures}\label{Sec4}
In this section we still consider a symmetric $n$-ary band $(X,F)$ and we investigate further relations between this structure and its associated binary band $(X,B)$.
On the one hand, when the map $\l$ associated with $B$ is not injective, it is natural to consider a quotient, and identify the elements of $X$ that have the same image by $\l$. On the other hand, considering a right normal band $(X,B)$, it is natural to study its strong semilattice decomposition (see Proposition \ref{prop:rightn}) induced by its smallest semilattice congruence (defined by \eqref{eq:sigma}). We will now show that both constructions lead to the same congruence for $(X,B)$. Moreover, we will prove that it is also a congruence for $F$ and that the restrictions of $F$ to each equivalence class is reducible to an Abelian group whose exponent divides $n-1$. 

From now on, we will denote by $\sigma$ the least semilattice congruence for the band $(X,B)$ associated with $(X,F)$. 
For every $x\in X$, we simply denote by $[x]$ its equivalence class modulo $\sigma$. 

The following result shows how to express $\sigma$ using the map $\l$ and properties of $F$.
\begin{proposition}\label{prop:sigma1}
For every $x,y\in X$, the following conditions are equivalent.
 \begin{enumerate}
  \item[(i)] $x\sigma y$.
  \item[(ii)] $\l_x=\l_y$.
  \item[(iii)] There exist $t,t'\in X$ such that $y=\l_x(t)$ and $x=\l_y(t')$.
  \item[(iv)] We have $y=F(x,x_2,\ldots,x_n)$ and $x=F(y,y_2,\ldots,y_n)$ for some $x_i,y_i\in X$ ($i\in\{2,\ldots,n\})$. 
 \end{enumerate}
\end{proposition}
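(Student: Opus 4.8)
The plan is to treat condition (ii), namely $\l_x=\l_y$, as the pivot and to show that each of (i), (iii), (iv) is equivalent to it, rather than attempting direct implications among the latter three. The starting point is the observation that $(X,B)$ is a right normal band (Proposition \ref{prop:impo}) and that $B(x,y)=\l_x(y)$, so the characterization \eqref{eq:sigma} of its least semilattice congruence reads $x\,\sigma\, y$ if and only if $\l_y(x)=x$ and $\l_x(y)=y$. This reformulation is exactly what links (i) and (ii), and the remaining work is carried by Proposition \ref{prop:impo}, Corollary \ref{cor:semi} and Proposition \ref{prop:impo3}.

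For (i) $\Rightarrow$ (ii) I would assume $\l_y(x)=x$ and $\l_x(y)=y$ and apply Proposition \ref{prop:impo} twice: $\l_x\circ\l_y=\l_{\l_x(y)}=\l_y$, and, using that composition is commutative, $\l_x\circ\l_y=\l_y\circ\l_x=\l_{\l_y(x)}=\l_x$; comparing the two conclusions gives $\l_x=\l_y$. The converse (ii) $\Rightarrow$ (i) is immediate from $\l_z(z)=z$: if $\l_x=\l_y$ then $\l_x(y)=\l_y(y)=y$ and $\l_y(x)=\l_x(x)=x$, which is precisely \eqref{eq:sigma}. The equivalence of (ii) with (iii) follows the same pattern. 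For (ii) $\Rightarrow$ (iii) I choose $t=y$ and $t'=x$, since then $\l_x(t)=\l_x(y)=\l_y(y)=y$ and $\l_y(t')=\l_y(x)=\l_x(x)=x$. For (iii) $\Rightarrow$ (ii), given $y=\l_x(t)$ and $x=\l_y(t')$, I pass to the operations $\l$: Proposition \ref{prop:impo} yields $\l_y=\l_{\l_x(t)}=\l_x\circ\l_t$ and $\l_x=\l_{\l_y(t')}=\l_y\circ\l_{t'}$. Working in the semilattice $(\{\l_x\colon x\in X\},\circ)$ of Corollary \ref{cor:semi}, where $\circ$ is idempotent and commutative, I compute $\l_x\circ\l_y=\l_x\circ\l_x\circ\l_t=\l_x\circ\l_t=\l_y$ and, symmetrically, $\l_y\circ\l_x=\l_y\circ\l_y\circ\l_{t'}=\l_y\circ\l_{t'}=\l_x$; commutativity then forces $\l_x=\l_y$.

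Finally, the link with (iv) is almost formal in one direction and uses Proposition \ref{prop:impo3} in the other. The implication (iii) $\Rightarrow$ (iv) requires no real argument, since $\l_x(t)=F((n-1)\Cdot x,t)$ is already of the form $F(x,x_2,\ldots,x_n)$ (take $x_2=\cdots=x_{n-1}=x$ and $x_n=t$), and likewise for $x=\l_y(t')$. For (iv) $\Rightarrow$ (ii), from $y=F(x,x_2,\ldots,x_n)$ I apply Proposition \ref{prop:impo3} to get $\l_y=\l_x\circ\l_{x_2}\circ\cdots\circ\l_{x_n}$, whence $\l_x\circ\l_y=\l_y$ by idempotency of $\l_x$ in the semilattice; symmetrically $x=F(y,y_2,\ldots,y_n)$ gives $\l_y\circ\l_x=\l_x$, and commutativity again yields $\l_x=\l_y$. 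Assembling these pieces closes all four conditions around (ii).

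I expect no genuine obstacle in this proof: once one recognises that the maps $\l_x$ form an idempotent commutative monoid under composition, every condition collapses to the single equality $\l_x=\l_y$. The only points demanding care are to route every implication through (ii) rather than attempting direct passages between (i), (iii) and (iv), and to invoke the simplified congruence description \eqref{eq:sigma}, valid precisely because $(X,B)$ is right normal, in place of the general band formula \eqref{leastcong}.
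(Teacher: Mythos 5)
Your proof is correct and takes essentially the same approach as the paper: both hinge on the right-normal characterization \eqref{eq:sigma} of $\sigma$ and the identities of Proposition \ref{prop:impo}, with the identical choices $t=y$ and $t'=x$ for (ii) $\Rightarrow$ (iii) and the same trivial reading of (iii) $\Rightarrow$ (iv). The only divergence is organizational: the paper closes a cycle (i) $\Rightarrow$ (ii) $\Rightarrow$ (iii) $\Rightarrow$ (iv) $\Rightarrow$ (i), proving the last step by a direct associativity/idempotency computation with $F$, whereas you hub everything at (ii) and derive (iv) $\Rightarrow$ (ii) from Proposition \ref{prop:impo3} --- an equally valid, non-circular variant, since that proposition precedes this one in the paper.
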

\begin{proof}
Let us show that (i) implies (ii). By \eqref{eq:sigma}, (i) is equivalent to the conditions $\l_x(y)=y$ and $\l_y(x)=x$. By Proposition \ref{prop:impo} we have
\[\l_x(t)=\l_{\l_y(x)}(t)=\l_{\l_x(y)}(t)=\l_y(t),\qquad t\in X.\]
To show that (ii) implies (iii), we observe that $y=\l_y(y)$, so that (ii) implies $y=\l_x(y)$, and in the same way we obtain $x=\l_y(x)$. It follows directly from the definition of $\l$ that (iii) implies (iv). Finally, we use the associativity and idempotency of $F$ to show that (iv) implies the relations $\l_x(y)=y$ and $\l_y(x)=x$. These conditions are equivalent to (i) by \eqref{eq:sigma}.
\end{proof}
The congruence $\sigma$ was built using the binary band $(X,B)$. We will now show that it also defines a decomposition of $(X,F)$.
\begin{proposition}\label{prop:congruence}
The equivalence relation $\sigma$ is a congruence for $F$.
\end{proposition}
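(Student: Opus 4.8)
The plan is to reduce the congruence condition to a statement about the maps $\l_x$, exploiting the characterization of $\sigma$ established in Proposition \ref{prop:sigma1} together with the multiplicativity of $\l$ proved in Proposition \ref{prop:impo3}. Recall that $\sigma$ is the least semilattice congruence for the right normal band $(X,B)$, and that by the equivalence of conditions (i) and (ii) in Proposition \ref{prop:sigma1} we have $x\sigma y$ if and only if $\l_x=\l_y$. Thus, to show that $\sigma$ is compatible with $F$, it suffices to show that whenever $\l_{x_i}=\l_{y_i}$ for all $i\in\{1,\ldots,n\}$, we also have $\l_{F(x_1,\ldots,x_n)}=\l_{F(y_1,\ldots,y_n)}$.

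First I would fix $x_1,\ldots,x_n,y_1,\ldots,y_n\in X$ with $x_i\sigma y_i$ for every $i$, and translate these hypotheses via Proposition \ref{prop:sigma1} into the equalities $\l_{x_i}=\l_{y_i}$. The crucial step is then to invoke Proposition \ref{prop:impo3}, which expresses $\l_{F(x_1,\ldots,x_n)}$ as the composition $\l_{x_1}\circ\cdots\circ\l_{x_n}$. Since composition of maps respects pointwise equality of its factors, the equalities $\l_{x_i}=\l_{y_i}$ immediately give
\[
\l_{F(x_1,\ldots,x_n)}=\l_{x_1}\circ\cdots\circ\l_{x_n}=\l_{y_1}\circ\cdots\circ\l_{y_n}=\l_{F(y_1,\ldots,y_n)}.
\]
Applying the equivalence of (i) and (ii) in Proposition \ref{prop:sigma1} once more, this yields $F(x_1,\ldots,x_n)\,\sigma\,F(y_1,\ldots,y_n)$, which is exactly the compatibility of $\sigma$ with $F$.

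I do not expect a genuine obstacle here: the entire difficulty has already been absorbed into the two preparatory results. The only thing worth being careful about is that Proposition \ref{prop:impo3} is stated for the $n$-fold composition in a fixed order, so one should note that no reordering of the factors is needed—the hypotheses are matched index by index ($x_i$ against $y_i$), so the two compositions are compared factor by factor in the same order. With that observation in place the argument is a direct substitution, and the proof is complete.
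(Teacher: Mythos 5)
Your proof is correct and follows exactly the paper's own argument: both translate $x_i\,\sigma\, y_i$ into $\l_{x_i}=\l_{y_i}$ via Proposition \ref{prop:sigma1} (i)$\Leftrightarrow$(ii), apply Proposition \ref{prop:impo3} to rewrite $\l_{F(x_1,\ldots,x_n)}$ as the composition $\l_{x_1}\circ\cdots\circ\l_{x_n}$, and conclude by the same equivalence. No gaps; the remark about matching factors index by index is a sensible (if optional) point of care.
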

\begin{proof}
Assume that $x_i\sigma y_i$ for every $i\in\{1,\ldots,n\}$. By Propositions \ref{prop:impo3} and \ref{prop:sigma1}, we have
$$
\l_{F(x_1,\ldots,x_n)} = \l_{x_1}\circ\cdots\circ\l_{x_{n}} = \l_{y_1}\circ \cdots \circ \l_{y_n} = \l_{F(y_1,\ldots,y_{n})}.
$$
By Proposition \ref{prop:sigma1}, we have $F(x_1,\ldots,x_n)\sigma F(y_1,\ldots,y_{n})$.
\end{proof}
Since $\sigma$ is a congruence for $F$ and for $B$, we can define the induced operations $F^\sigma$ and $B^\sigma$ on $X/\sigma$ in the usual way by setting for all $x_1,\ldots,x_n\in X$,
\begin{equation}\label{FsigmaBsigma}
F^\sigma([x_1],\ldots,[x_n])=[F(x_1,\ldots,x_n)]\quad\mbox{and}\quad B^\sigma([x_1],[x_2])=[B(x_1,x_2)].
\end{equation}
\begin{example}\label{ex:example3}
For both structures presented in Examples \ref{ex:1.1} and \ref{ex:2.6}, we only have $\l_3=\l_4$ so $[1]=\{1\}$, $[2]=\{2\}$, and $[3]=\{3,4\}$. The semilattice $(X/\sigma,B^\sigma)$ is obtained by identifying $3$ and $4$ in the tables given in Example \ref{ex:2.6}. We see that $(X/\sigma,B^\sigma)$ is a semilattice whose Hasse diagram is given in Figure \ref{fig:nonass}.
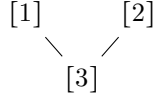
\begin{figure}[h]
\begin{center}
\begin{tikzpicture}[scale=0.5]
\clip (-1.9,-2.1) rectangle (1.9,0.5);
\node (3) at (0,-1.71) {$[3]$};
\node (1) at (-1.5,0) {$[1]$};
\node (2) at (1.5,0) {$[2]$};
\draw (1)--(3);
\draw (2) --(3);
\end{tikzpicture}
\end{center}
\caption{Hasse diagram of $(X/\sigma,B^\sigma)$\label{fig:nonass}}
\end{figure}
\end{example}

We will show in the next proposition that $(X/\sigma,F^\sigma)$ is also a symmetric $n$-ary band. We will then denote by $(X/\sigma,B_{F^\sigma})$ its associated band. We now give the most important properties of these maps.
\begin{proposition}\label{prop:Fsymb}
The pair $(X/\sigma, F^\sigma)$ is a symmetric $n$-ary band. Moreover, we have $B_{F^\sigma}=B^\sigma$. In particular $(X/\sigma, F^\sigma)$ is the $n$-ary extension of the semilattice $(X/\sigma, B^\sigma)$.
\end{proposition}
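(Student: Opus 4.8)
The plan is to establish the three assertions in the order stated, each one feeding into the next. For the first assertion, I would start from Proposition~\ref{prop:congruence}, which guarantees that $\sigma$ is a congruence for $F$, so that the operation $F^\sigma$ defined by \eqref{FsigmaBsigma} is well defined on $X/\sigma$. The point is then that associativity, symmetry and idempotency are equational properties that descend automatically to the quotient: any instance of such an identity for $F^\sigma$, written in terms of representatives and rewritten via $F^\sigma([x_1],\ldots,[x_n])=[F(x_1,\ldots,x_n)]$, reduces to the corresponding identity for $F$, which holds by hypothesis. Hence $(X/\sigma,F^\sigma)$ is a symmetric $n$-ary band.

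For the equality $B_{F^\sigma}=B^\sigma$, I would simply unwind the two definitions. Applying Definition~\ref{def:binary} to $F^\sigma$ gives $B_{F^\sigma}([x],[y])=F^\sigma((n-1)\Cdot[x],[y])$, and \eqref{FsigmaBsigma} turns the right-hand side into $[F((n-1)\Cdot x,y)]=[B(x,y)]$, which is exactly $B^\sigma([x],[y])$. No difficulty arises here.

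Finally, for the reducibility claim, I would apply Proposition~\ref{prop:propinj1} to the symmetric $n$-ary band $(X/\sigma,F^\sigma)$, whose associated binary band is $B_{F^\sigma}$. Since $\sigma$ is by construction a \emph{semilattice} congruence for $(X,B)$, the quotient $(X/\sigma,B^\sigma)$ is a semilattice and in particular commutative; combined with the identity $B_{F^\sigma}=B^\sigma$ just obtained, this shows that condition~(iii) of Proposition~\ref{prop:propinj1} holds for $(X/\sigma,F^\sigma)$. The implication (iii)~$\Rightarrow$~(iv) then yields that $(X/\sigma,F^\sigma)$ is the $n$-ary extension of its associated band $B_{F^\sigma}=B^\sigma$, as desired.

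I do not expect any genuine obstacle: once the correct earlier results are invoked, the argument is essentially bookkeeping. The only points deserving care are to use the well-definedness provided by Proposition~\ref{prop:congruence} when pushing the identities through the quotient, and to recognize that the commutativity of $B^\sigma$ is immediate from the very meaning of a semilattice congruence, so that no separate verification is needed before appealing to Proposition~\ref{prop:propinj1}.
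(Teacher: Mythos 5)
Your proposal is correct and follows essentially the same route as the paper's proof: well-definedness and the band axioms descend through the congruence $\sigma$, the identity $B_{F^\sigma}=B^\sigma$ is verified by the same one-line computation, and the final claim is obtained by noting that $B^\sigma$ is commutative (since $\sigma$ is a semilattice congruence for $B$) and invoking the implication (iii)\,$\Rightarrow$\,(iv) of Proposition~\ref{prop:propinj1}, exactly as the paper does.
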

\begin{proof}
The proof of the first statement is straightforward, using the properties of $F$ and the definition of $F^\sigma$. Moreover, for every $x,y\in X$ we compute
\[B_{F^\sigma}([x],[y])=F^\sigma((n-1)\Cdot [x],[y])=[F((n-1)\Cdot x,y)]=[B(x,y)]=B^\sigma([x],[y]),\]
which shows the second statement. Also, since $\sigma$ is a semilattice congruence for $B$, $B^\sigma$ is symmetric. Thus $B_{F^\sigma}$ is symmetric and the result then follows from Proposition \ref{prop:propinj1}.
\end{proof}
This proposition suggests to generalize the definition of semilattice congruences for $n$-ary 
semigroups $(X,F)$.
\begin{definition}
An $n$-ary semigroup $(X,F)$ is an $n$-ary semilattice if it is the $n$-ary extension of a semilattice. A congruence $\sim$ on an $n$-ary semigroup $(X,F)$ is an $n$-ary semilattice congruence if $(X/\sim,\tilde{F})$ is an $n$-ary semilattice.
\end{definition}
Proposition \ref{prop:Fsymb} shows that $\sigma$ is an $n$-ary semilattice congruence on $(X,F)$. 

Since $\sigma$ is a congruence for $F$, this operation restricts to each equivalence class. We now analyze the properties of this restriction. 

\begin{proposition}\label{prop:group}
For any $x\in X$, $([x],F|_{[x]^n})$ is the $n$-ary extension of an Abelian group whose exponent divides $n-1$.
\end{proposition}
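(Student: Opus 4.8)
The plan is to reduce this statement to Proposition \ref{prop:group1} applied to the $n$-ary band obtained by restricting $F$ to a single class $[x]$. First I would observe that, since $\sigma$ is a congruence for $F$ (Proposition \ref{prop:congruence}) and every element of $[x]$ is $\sigma$-equivalent to $x$, the operation $F$ maps $[x]^n$ into $[x]$; hence $F|_{[x]^n}$ is a well-defined $n$-ary operation on $[x]$. Symmetry, idempotency and associativity are inherited from $F$, so $([x],F|_{[x]^n})$ is a symmetric $n$-ary band in its own right.

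Next I would identify its associated binary band. For $y,z\in[x]$, the definition gives
\[
B_{F|_{[x]^n}}(y,z)=F|_{[x]^n}((n-1)\Cdot y,z)=F((n-1)\Cdot y,z)=B(y,z),
\]
so the binary band associated with the restriction is simply the restriction of $(X,B)$ to $[x]$. The crucial point is then that this restriction is a right zero semigroup: since any two elements $y,z\in[x]$ satisfy $y\sigma z$, the characterization \eqref{eq:sigma} of the least semilattice congruence of the right normal band $(X,B)$ yields $B(y,z)=z$ for all $y,z\in[x]$, which is exactly the defining identity of a right zero semigroup. Alternatively, this is the content of the explicit strong semilattice decomposition of $(X,B)$ recorded after Proposition \ref{prop:rightn}, whose components are precisely right zero semigroups.

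With these facts in hand, the conclusion follows at once: applying the equivalence of conditions (iii) and (ii) in Proposition \ref{prop:group1} to the symmetric $n$-ary band $([x],F|_{[x]^n})$ — whose associated binary band is the right zero semigroup obtained by restricting $B$ to $[x]$ — shows that $([x],F|_{[x]^n})$ is the $n$-ary extension of an Abelian group whose exponent divides $n-1$. I do not expect a genuine obstacle here; the only steps requiring care are verifying that the binary band associated with the restriction coincides with the restriction of $B$, and that this restriction is right zero, after which Proposition \ref{prop:group1} performs all the remaining work.
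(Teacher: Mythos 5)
Your proposal is correct and follows essentially the same route as the paper's proof: restrict $F$ to a $\sigma$-class, check that the associated binary band of the restriction is the restriction of $B$, note that this restriction is a right zero semigroup, and conclude via Proposition \ref{prop:group1}. Your only deviations are cosmetic improvements---making explicit the well-definedness of $F|_{[x]^n}$ via Proposition \ref{prop:congruence} and idempotency, and deriving the right zero identity directly from \eqref{eq:sigma} rather than citing Proposition \ref{prop:rightn}.
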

\begin{proof}
Since the restriction of operations preserves associativity, symmetry, and idempotency, for every $x\in X$, $([x],F|_{[x]^n})$ is a symmetric $n$-ary band. Its associated binary band is defined by $B_{F|_{[x]^n}}(y,z)=F|_{[x]^n}((n-1)\Cdot y,z)=B_F(y,z)$, for every $y,z\in[x]$. It is thus the restriction of $B_F$ to $[x]^2$. By Proposition \ref{prop:rightn} this restriction defines a right zero semigroup. The result then follows from Proposition \ref{prop:group1}.
\end{proof}
\begin{example}\label{ex:example4}
For both structures presented in Example \ref{ex:1.1}, the restriction to $[3]$ is isomorphic to the ternary extension of $(\mathbb{Z}_2,+)$. Observe that selecting $3$ or $4$ as the neutral element of this group leads to the same ternary extension.
\end{example}
Recall that the strong semilattice decomposition of $(X,B)$ defines a family of right zero semigroups homomorphisms ($\varphi_{[x],[y]},[x]\geqslant [y])$ defined by $\varphi_{[x],[y]}=\l_y|_{[x]}$. In the next proposition we study the compatibility of these maps with respect to the structure induced by $F$ on the classes $[x]$ and $[y]$.
\begin{proposition}\label{prop:homgroupphi}
For every $x,y\in X$ such that $[x]\geqslant [y]$, the map $\varphi_{[x],[y]}$ is a homomorphism from $([x],F|_{[x]^n})$ to $([y],F|_{[y]^n})$.
\end{proposition}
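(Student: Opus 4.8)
The plan is to observe that $\varphi_{[x],[y]}$ is nothing but the restriction $\l_y|_{[x]}$, and then to exploit the fact, already established in Proposition~\ref{prop:end}, that $\l_y$ is an endomorphism of the \emph{whole} $n$-ary band $(X,F)$. The homomorphism property we are after is essentially inherited from this global statement; the only thing requiring attention is the bookkeeping of the domain and codomain, i.e.\ checking that the relevant classes are the right ones.

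First I would recall from the explicit description of the strong semilattice decomposition of the right normal band $(X,B)$ given at the end of Subsection~\ref{Sec2.2} that $\varphi_{[x],[y]}(z)=B(y,z)=\l_y(z)$ for every $z\in[x]$, and that this map sends $[x]$ into $[y]$. If one prefers a self-contained argument, this last point can be re-derived from Proposition~\ref{prop:impo}: for $z\in[x]$ we have $\l_z=\l_x$ by Proposition~\ref{prop:sigma1}, whence $\l_{\l_y(z)}=\l_y\circ\l_z=\l_y\circ\l_x=\l_x\circ\l_y=\l_y$, the last equality being exactly the relation $[x]\geqslant[y]$ read in the semilattice $(\{\l_x\colon x\in X\},\circ)$ of Corollary~\ref{cor:semi}. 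Thus $\l_y(z)\,\sigma\,y$, that is, $\l_y(z)\in[y]$, so $\varphi_{[x],[y]}$ genuinely maps $[x]$ into $[y]$.

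Next, since $\sigma$ is a congruence for $F$ (Proposition~\ref{prop:congruence}), $F$ restricts to $[x]^n$ and to $[y]^n$, so that $F|_{[x]^n}$, $F|_{[y]^n}$, and the statement itself make sense. For $z_1,\ldots,z_n\in[x]$ I would then simply compute, using that $\l_y$ is an endomorphism on $(X,F)$,
\[
\varphi_{[x],[y]}\bigl(F(z_1,\ldots,z_n)\bigr)=\l_y\bigl(F(z_1,\ldots,z_n)\bigr)=F\bigl(\l_y(z_1),\ldots,\l_y(z_n)\bigr).
\]
Since each $\l_y(z_i)=\varphi_{[x],[y]}(z_i)$ lies in $[y]$ by the first step, the right-hand side equals $F|_{[y]^n}\bigl(\varphi_{[x],[y]}(z_1),\ldots,\varphi_{[x],[y]}(z_n)\bigr)$, which is precisely the homomorphism condition. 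I do not expect any genuine obstacle here: the substantive content, namely that $\l_y$ commutes with $F$, is already packaged in Proposition~\ref{prop:end}, and the only care needed is the routine verification that $\varphi_{[x],[y]}$ lands in $[y]$, so that the displayed computation may be read inside $([y],F|_{[y]^n})$ rather than merely inside $(X,F)$.
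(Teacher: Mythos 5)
Your proof is correct and takes essentially the same route as the paper, whose entire proof is the one-line observation that the claim follows from Proposition~\ref{prop:end} and the definition $\varphi_{[x],[y]}=\l_y|_{[x]}$. You have simply made explicit the two verifications the paper leaves implicit---that $\l_y$ maps $[x]$ into $[y]$ (which you correctly derive from Propositions~\ref{prop:impo} and~\ref{prop:sigma1}) and the endomorphism computation from Proposition~\ref{prop:end}---and both are carried out correctly.
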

\begin{proof}
This follows from Proposition \ref{prop:end} and from the definition of the map $\varphi_{[x],[y]}$.
\end{proof}
As in the case of binary structures, the existence of an $n$-ary semilattice congruence induces a decomposition of $X$ as a disjoint union of the family of subsets $\{X_{\alpha}\colon \alpha\in Y\}$ where $(Y,\curlywedge)$ is a semilattice. Moreover, the operation $F$ induces on each $X_\alpha$ an $n$-ary semigroup operation $F_\alpha$ and we have 
\begin{equation}\label{eq:semisem2}
F(X_{\alpha_1}\times \cdots \times X_{\alpha_n}) \subseteq X_{\alpha_1 \curlywedge \cdots \curlywedge \alpha_n}, \qquad \alpha_1,\ldots,\alpha_n\in Y.
\end{equation}
We call such a structure an \emph{$n$-ary semilattice $(Y,\curlywedge^{n-1})$ of $n$-ary semigroups $(X_{\alpha},F_{\alpha})$} and we write $(X,F) = ((Y,\curlywedge^{n-1});(X_{\alpha},F_{\alpha}))$. We also say that $(X,F)$ is an $n$-ary semilattice of $n$-ary semigroups.

The fact that an $n$-ary groupoid is an $n$-ary semilattice of $n$-ary semigroups is not sufficient to ensure that it is an $n$-ary semigroup. We need to introduce a generalization of the strong semilattice decomposition. This is done in the following definition.
\begin{definition}\label{def:strongnsem}
Let $(X,F) = ((Y,\curlywedge^{n-1});(X_{\alpha},F_{\alpha}))$ be an $n$-ary semilattice of $n$-ary semigroups. Suppose that for any $\alpha,\beta\in Y$ such that $\alpha \geqslant \beta$ there is a homomorphism $\varphi_{\alpha,\beta}\colon X_{\alpha} \to X_{\beta}$ such that the following conditions hold.
\begin{enumerate}
\item[(a)] The map $\varphi_{\alpha,\alpha}$ is the identity on $X_{\alpha}$.
\item[(b)] For any $\alpha,\beta,\gamma\in Y$ such that $\alpha \geqslant \beta \geqslant \gamma$ we have $\varphi_{\beta,\gamma}\circ \varphi_{\alpha,\beta} = \varphi_{\alpha,\gamma}$.
\item[(c)] For any $(x_1,\ldots,x_n)\in X_{\alpha_1}\times \cdots \times X_{\alpha_n}$ we have
$$
F(x_1,\ldots,x_n) = F_{\alpha_1\curlywedge\cdots \curlywedge \alpha_n}(\varphi_{\alpha_1,\alpha_1\curlywedge\cdots \curlywedge \alpha_n}(x_1),\ldots,\varphi_{\alpha_n,\alpha_1\curlywedge\cdots \curlywedge \alpha_n}(x_n)).
$$
\end{enumerate}
Then $(X,F)$ is said to be a \emph{strong $n$-ary semilattice $(Y,\curlywedge^{n-1})$ of $n$-ary semigroups $(X_{\alpha},F_{\alpha})$}. In this case we write $(X,F) = ((Y,\curlywedge^{n-1});(X_{\alpha},F_{\alpha});\varphi_{\alpha,\beta})$ (or simply $[Y,(X_\alpha,F_\alpha),\varphi_{\alpha,\beta}])$ and we also say that $(X,F)$ is a strong $n$-ary semilattice of $n$-ary semigroups.
\end{definition}
We can now state our main structure theorem for symmetric $n$-ary bands.
\begin{theorem}\label{thm:mainnband}
An $n$-ary groupoid $(X,F)$ is a symmetric $n$-ary band if and only if is a strong $n$-ary semilattice of $n$-ary extensions of Abelian groups whose exponents divide $n-1$.
\end{theorem}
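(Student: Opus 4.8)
The plan is to prove the two implications separately, assembling the results of Section \ref{Sec4} for the forward direction and performing a direct verification for the converse. For the forward implication, suppose $(X,F)$ is a symmetric $n$-ary band and let $\sigma$ be the least semilattice congruence of the associated right normal band $(X,B)$. I would take the index semilattice to be $(Y,\curlywedge)=(X/\sigma,B^\sigma)$; by Proposition \ref{prop:Fsymb} the quotient $(X/\sigma,F^\sigma)$ is the $n$-ary extension of this semilattice, so that $\curlywedge^{n-1}=F^\sigma$. For each class $\alpha=[x]$ I set $X_\alpha=[x]$ and $F_\alpha=F|_{[x]^n}$, which by Proposition \ref{prop:group} is the $n$-ary extension of an Abelian group whose exponent divides $n-1$. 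Since $\sigma$ is a congruence for $F$ (Proposition \ref{prop:congruence}), the containment \eqref{eq:semisem2} holds, so $(X,F)$ is already an $n$-ary semilattice of such $n$-ary semigroups, and it remains to produce the connecting homomorphisms and verify conditions (a)--(c) of Definition \ref{def:strongnsem}.

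For the homomorphisms I would use those provided by the strong semilattice decomposition of the right normal band $(X,B)$ (Proposition \ref{prop:rightn}), namely $\varphi_{[x],[y]}=\l_y|_{[x]}$ for $[x]\geqslant[y]$; these are well defined since $\l_y$ depends only on $[y]$ (Proposition \ref{prop:sigma1}), and they are homomorphisms of the $n$-ary semigroups $([x],F_\alpha)$ by Proposition \ref{prop:homgroupphi}. Conditions (a) and (b) are then inherited from the binary structure: for (a), if $z\in[x]$ then $\l_x=\l_z$ and $\l_x(z)=\l_z(z)=z$; for (b), if $[x]\geqslant[y]\geqslant[z]$ then Proposition \ref{prop:impo} and $\l_y(z)=z$ give $\l_z\circ\l_y=\l_{\l_y(z)}=\l_z$. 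The decisive condition (c) is precisely Corollary \ref{cor:proj}: writing $f=F(x_1,\ldots,x_n)$ and $\gamma=\alpha_1\curlywedge\cdots\curlywedge\alpha_n=[f]$, the relevant homomorphism is $\varphi_{\alpha_i,\gamma}=\l_f|_{[x_i]}$, and Corollary \ref{cor:proj} states exactly that $F(x_1,\ldots,x_n)=F(\l_f(x_1),\ldots,\l_f(x_n))=F_\gamma(\varphi_{\alpha_1,\gamma}(x_1),\ldots,\varphi_{\alpha_n,\gamma}(x_n))$. This settles the forward implication.

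For the converse, suppose $(X,F)=((Y,\curlywedge^{n-1});(X_\alpha,F_\alpha);\varphi_{\alpha,\beta})$ is a strong $n$-ary semilattice of $n$-ary extensions of Abelian groups of exponent dividing $n-1$, with $F$ defined by condition (c). Idempotency is immediate from condition (a): for $x\in X_\alpha$ we get $F(x,\ldots,x)=F_\alpha(x,\ldots,x)=x$. Symmetry holds because $\curlywedge$ is commutative, so the meet $\gamma=\alpha_1\curlywedge\cdots\curlywedge\alpha_n$ is permutation invariant, and each $F_\gamma$ is symmetric as the $n$-ary extension of an Abelian group; hence permuting the arguments of $F$ only permutes the arguments of $F_\gamma$ in condition (c).

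The main obstacle is associativity, where conditions (b) and (c) and the homomorphism property must be combined. I would evaluate both sides of \eqref{assoc} and show each lies in, and is computed inside, the single semigroup $X_\delta$ with $\delta=\alpha_1\curlywedge\cdots\curlywedge\alpha_{2n-1}$, where $\alpha_j$ is the index of $x_j$. On the left side, set $y=F(x_i,\ldots,x_{i+n-1})\in X_\beta$ with $\beta=\alpha_i\curlywedge\cdots\curlywedge\alpha_{i+n-1}\geqslant\delta$; expanding the outer $F$ by condition (c) involves $\varphi_{\beta,\delta}(y)$, and applying the homomorphism property of $\varphi_{\beta,\delta}$ to the inner expression for $y$, followed by the collapses $\varphi_{\beta,\delta}\circ\varphi_{\alpha_j,\beta}=\varphi_{\alpha_j,\delta}$ from condition (b), rewrites the left side as $F_\delta$ applied in nested form to the images $\varphi_{\alpha_j,\delta}(x_j)$. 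Running the identical reduction on the right side yields the same nested expression up to regrouping, and since $F_\delta$ is itself associative the two agree. Thus $F$ is associative, and together with idempotency and symmetry $(X,F)$ is a symmetric $n$-ary band. The only delicate point is the index bookkeeping in this last reduction; conceptually it shows that the associativity of $F$ is entirely reduced to the associativity of each single group extension $F_\delta$.
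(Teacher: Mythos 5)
Your proposal is correct and takes essentially the same route as the paper: the forward direction assembles Propositions \ref{prop:Fsymb}, \ref{prop:group}, \ref{prop:rightn} and \ref{prop:homgroupphi}, with Corollary \ref{cor:proj} supplying condition (c) via the identity $[F(x_1,\ldots,x_n)]=\alpha_1\curlywedge\cdots\curlywedge\alpha_n$, exactly as in the paper's proof. Your converse argument also matches the paper's, the only cosmetic difference being that you prove associativity inline, whereas the paper factors it out as Proposition \ref{prop:carstrongnsem} for arbitrary strong $n$-ary semilattices of $n$-ary semigroups.
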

In order to prove Theorem \ref{thm:mainnband} we will make use of the following proposition that holds in a more general context.
\begin{proposition}\label{prop:carstrongnsem}
If $(X,F) = ((Y,\curlywedge^{n-1});(X_{\alpha},F_{\alpha});\varphi_{\alpha,\beta})$ is a strong $n$-ary semilattice of $n$-ary semigroups, then it is an $n$-ary semigroup.
\end{proposition}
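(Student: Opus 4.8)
The plan is to verify the associativity identity \eqref{assoc} directly, mimicking the classical proof that a strong semilattice of semigroups is a semigroup (the binary case recalled after Definition \ref{def:strong}), but carried out for the $n$-ary operation defined by condition (c) of Definition \ref{def:strongnsem}. I fix elements $x_1,\ldots,x_{2n-1}$ with $x_j\in X_{\alpha_j}$ and an index $i\in\{1,\ldots,n-1\}$. The first observation, and the conceptual heart of the argument, is that both sides of \eqref{assoc} land in the \emph{same} component $X_\delta$, where $\delta=\alpha_1\cw\cdots\cw\alpha_{2n-1}$. Indeed, on the left-hand side the inner application $F(x_i,\ldots,x_{i+n-1})$ lies in $X_\beta$ with $\beta=\alpha_i\cw\cdots\cw\alpha_{i+n-1}$ by \eqref{eq:semisem2}, and the outer application then lies in $X_{\alpha_1\cw\cdots\cw\alpha_{i-1}\cw\beta\cw\alpha_{i+n}\cw\cdots\cw\alpha_{2n-1}}$, which by commutativity and associativity of $\cw$ equals $X_\delta$. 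The same computation applied to the right-hand side gives $X_\delta$ as well, and $\delta$ does not depend on $i$.

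The second step is to push everything down to the single component $X_\delta$. I would set $u_j=\varphi_{\alpha_j,\delta}(x_j)\in X_\delta$, noting that these maps are defined since $\alpha_j\geqslant\delta$ for every $j$. Applying condition (c) to the outer product on the left-hand side, and then using that $\varphi_{\beta,\delta}$ is a homomorphism from $(X_\beta,F_\beta)$ to $(X_\delta,F_\delta)$ together with condition (b) in the form $\varphi_{\beta,\delta}\circ\varphi_{\alpha_j,\beta}=\varphi_{\alpha_j,\delta}$ (valid because $\alpha_j\geqslant\beta\geqslant\delta$ for $j\in\{i,\ldots,i+n-1\}$), I would rewrite $\varphi_{\beta,\delta}(F(x_i,\ldots,x_{i+n-1}))$ as $F_\delta(u_i,\ldots,u_{i+n-1})$. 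This collapses the left-hand side of \eqref{assoc} to
\[
F_\delta(u_1,\ldots,u_{i-1},F_\delta(u_i,\ldots,u_{i+n-1}),u_{i+n},\ldots,u_{2n-1}).
\]
The analogous manipulation of the right-hand side yields $F_\delta(u_1,\ldots,u_i,F_\delta(u_{i+1},\ldots,u_{i+n}),u_{i+n+1},\ldots,u_{2n-1})$.

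To finish, since $(X_\delta,F_\delta)$ is by hypothesis an $n$-ary semigroup, $F_\delta$ itself satisfies \eqref{assoc}, so the two rewritten expressions coincide; this establishes associativity of $F$, hence that $(X,F)$ is an $n$-ary semigroup. The hard part will be purely bookkeeping: one must track the indices in the nested applications and check that every map $\varphi$ appearing is legitimately defined (its source index dominates its target in $(Y,\cw)$) before invoking (b). The homomorphism property of $\varphi_{\beta,\delta}$ and condition (b) are precisely what is needed to fuse the two-stage descent $X_{\alpha_j}\to X_\beta\to X_\delta$ into the single map $\varphi_{\alpha_j,\delta}$, after which associativity of $F$ is inherited directly from that of $F_\delta$.
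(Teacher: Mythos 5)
Your proposal is correct and takes essentially the same route as the paper's proof: both arguments apply condition (c) to the inner and outer applications, use the homomorphism property of $\varphi_{\beta,\delta}$ together with condition (b) to fuse the two-stage descent $X_{\alpha_j}\to X_\beta\to X_\delta$ into the single map $\varphi_{\alpha_j,\delta}$, and conclude from the associativity of $F_\delta$ together with the observation that $\delta=\alpha_1\cw\cdots\cw\alpha_{2n-1}$ does not depend on $i$ (in the paper's notation, $\delta$ is $\alpha$ and $\beta$ is $\alpha'$). Your explicit checks that every $\varphi$ invoked has its source dominating its target are exactly the bookkeeping the paper leaves implicit.
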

\begin{proof}
Let $x_1,\ldots,x_{2n-1}\in X$, $i\in\{1,\ldots,n\}$ and let us consider the expression
\[F(x_1,\ldots,x_{i-1},F(x_i,\ldots,x_{i+n-1}),x_{i+n},\ldots,x_{2n-1}).\]
For $1\leq k\leq 2n-1$ there exists $\alpha_k\in Y$ such that $x_k$ belongs to $X_{\alpha_k}$. Then we set $\alpha'=\alpha_{i}\curlywedge\cdots\curlywedge\alpha_{i+n-1}$ and we have
\[F(x_i,\ldots,x_{i+n-1})=F_{\alpha'}(\varphi_{\alpha_i,\alpha'}(x_i),\ldots,\varphi_{\alpha_{i+n-1},\alpha'}(x_{i+n-1}))\in X_{\alpha'}.\]
We denote this element by $x$ and we set $\alpha=\alpha_1\cw\cdots\cw\alpha_{2n-1}$. We then have
\begin{multline*}
F(x_1\ldots,x_{i-1},x,x_{i+n},\ldots,x_{2n-1})\\
= F_\alpha(\varphi_{\alpha_1,\alpha}(x_1),\ldots,\varphi_{\alpha_{i-1},\alpha}(x_{i-1}),\varphi_{\alpha',\alpha}(x),\varphi_{\alpha_{i+n},\alpha}(x_{i+n}),\ldots,\varphi_{\alpha_{2n-1},\alpha}(x_{2n-1})).
\end{multline*}
Also, using the fact that $\varphi_{\alpha',\alpha}$ is a homomorphism and condition (b) of Definition \ref{def:strongnsem}, we have that
\[\varphi_{\alpha',\alpha}(x)=F_\alpha(\varphi_{\alpha_i,\alpha}(x_i),\ldots,\varphi_{\alpha_{i+n-1},\alpha}(x_{i+n-1})).\]
Thus, the associativity of $F$ follows from the associativity of $F_\alpha$ and from the fact that $\alpha$ is independent of $i$.
\end{proof}
\begin{proof}[Proof of Theorem \ref{thm:mainnband}]
 We already proved all the necessary properties to ensure that every symmetric $n$-ary band $(X,F)$ is a strong $n$-ary semilattice of Abelian groups whose exponents divide $n-1$:  
 We first associated with it a right normal band $(X,B)$ (see Definition \ref{def:binary} and
 Proposition \ref{prop:impo}). The smallest semilattice congruence $\sigma$ on this band (defined by \eqref{eq:sigma}) is an $n$-ary semilattice congruence on $(X,F)$ by Proposition \ref{prop:Fsymb}. Proposition \ref{prop:group} shows that the restriction of $F$ to each class of the decomposition defines an $n$-ary extension of an Abelian group whose exponent divides $n-1$. Proposition \ref{prop:rightn} implies the existence of maps $\varphi_{[x],[y]}$ from $[x]$ to $[y]$ when $[x]\geqslant[y]$. These maps are defined by $\varphi_{[x],[y]}=\l_y|_{[x]}$ and they satisfy conditions (a) and (b) of Definition \ref{def:strongnsem}. Proposition \ref{prop:homgroupphi} then shows that these maps are homomorphisms.
 
It remains to prove condition (c) of Definition \ref{def:strongnsem}. We use Corollary \ref{cor:proj}: if $x_i\in X_{\alpha_i}$, we have 
\[F(x_1,\ldots,x_n)=F(\l_{F(x_1,\ldots,x_n)}(x_1),\ldots,\l_{F(x_1,\ldots,x_n)}(x_n)).\] The definition of $F^\sigma$ implies $[F(x_1,\ldots,x_n)]=F^\sigma([x_1],\ldots,[x_n])$. Moreover, by Proposition \ref{prop:Fsymb}, $F^\sigma$ is the $n$-ary extension of $\cw=B^\sigma$, the semilattice operation on $X/\sigma$, so we have 
\[[F(x_1,\ldots,x_n)]=[x_1]\cw\cdots\cw[x_n]=\alpha_1\cw\cdots\cw\alpha_n\leqslant \alpha_i\quad i\in\{1,\ldots,n\},\]
and so setting $\alpha=\alpha_1\cw\cdots\cw\alpha_n$ we have $\l_{F(x_1,\ldots,x_n)}(x_i)=\varphi_{\alpha_i,\alpha}(x_i)\in X_\alpha$ for every $i\in\{1,\ldots,n\}$.

Now, let us show the converse statement. Assume that $(X,F) = [Y,(X_{\alpha},F_{\alpha}),\varphi_{\alpha,\beta}]$ is a strong $n$-ary semilattice of $n$-ary extensions of Abelian groups whose exponents divide $n-1$. The associativity of $F$ follows from Proposition \ref{prop:carstrongnsem}. The idempotency of $F$ follows from conditions (a) and (c) of Definition \ref{def:strongnsem} and from the idempotency of the $n$-ary operations $F_\alpha$. Finally, the symmetry of $F$ follows from condition (c) of Definition \ref{def:strongnsem} and the symmetry of the $n$-ary operations $F_\alpha$ and $\cw^{n-1}$.
\end{proof}
In view of this result, in order to build symmetric $n$-ary bands, we have to consider Abelian groups whose exponents divide $n-1$, and build homomorphisms between the $n$-ary extensions of such groups. These homomorphisms are described in the next result.
\begin{proposition}\label{prop:homomorphims}
Let $(X_1,\ast_1)$ and $(X_2,\ast_2)$ be two Abelian groups whose exponents divide $n-1$ and denote by $F_1$ and $F_2$ the $n$-ary extensions of $\ast_1$ and $\ast_2$ respectively. For every group homomorphism $\psi\colon X_1\to X_2$ and every $g_2\in X_2$, the map $h\colon X_1\to X_2$ defined by
\[h(x)=g_2\ast_2\psi(x),\quad x\in X_1\]
is a homomorphism of $n$-ary semigroups.

Conversely, every homomorphism from $(X_1,F_1)$ to $(X_2,F_2)$ is obtained in this way.
\end{proposition}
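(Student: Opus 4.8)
The plan is to prove the two implications by direct computation in the groups, the only non-routine ingredient being the exponent hypothesis. For the forward direction, I would fix a group homomorphism $\psi\colon X_1\to X_2$ and an element $g_2\in X_2$, set $h(x)=g_2\ast_2\psi(x)$, and compare the two sides of the $n$-ary homomorphism identity. Since $F_i$ is the $n$-ary extension of $\ast_i$, this amounts to expanding
\[
h(x_1\ast_1\cdots\ast_1 x_n)\qquad\text{and}\qquad h(x_1)\ast_2\cdots\ast_2 h(x_n).
\]
The first equals $g_2\ast_2\psi(x_1)\ast_2\cdots\ast_2\psi(x_n)$ because $\psi$ is a group homomorphism. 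In the second, the commutativity of $\ast_2$ lets me collect the $n$ copies of $g_2$ into a single $n$-fold product of $g_2$; and because the exponent of $X_2$ divides $n-1$, the $(n-1)$-fold product of $g_2$ is the neutral element $e_2$, so this $n$-fold product collapses to $g_2$. The two expressions then coincide.

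For the converse, let $h\colon(X_1,F_1)\to(X_2,F_2)$ be an arbitrary homomorphism of $n$-ary semigroups, write $e_1$ for the neutral element of $X_1$, and put $g_2=h(e_1)$. I would then define $\psi(x)=g_2^{-1}\ast_2 h(x)$, so that the desired representation $h(x)=g_2\ast_2\psi(x)$ holds by construction (and $\psi(e_1)=e_2$); everything then reduces to showing that $\psi$ is a group homomorphism. The key move is to evaluate the $n$-ary homomorphism identity on the tuple $(x,y,e_1,\ldots,e_1)$ with $n-2$ trailing copies of $e_1$. Since $e_1$ is neutral for $\ast_1$, the left side is $h(x\ast_1 y)$, while the right side is $h(x)\ast_2 h(y)\ast_2 g_2^{\,n-2}$. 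Substituting $h=g_2\ast_2\psi(\cdot)$, using commutativity to gather the powers of $g_2$, and again invoking $g_2^{n-1}=e_2$ to reduce the total power of $g_2$ back to a single $g_2$, I obtain $g_2\ast_2\psi(x\ast_1 y)=g_2\ast_2\psi(x)\ast_2\psi(y)$; cancelling $g_2$ yields $\psi(x\ast_1 y)=\psi(x)\ast_2\psi(y)$, as required.

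I expect no genuine obstacle here: both directions are chains of elementary manipulations. The one point that must be deployed in exactly the right place---and that explains why the exponent hypothesis cannot be dropped---is the identity reducing the $n$-fold product of $g_2$ to $g_2$, which is precisely where $g_2^{n-1}=e_2$ enters. The only mildly inventive step is the choice of the substitution $(x,y,e_1,\ldots,e_1)$ in the converse, and even this is essentially forced once one aims to extract a binary relation for $\psi$ out of the $n$-ary identity.
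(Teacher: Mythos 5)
Your proposal is correct and follows essentially the same route as the paper's proof: the forward direction is the same expansion in which the $n$ copies of $g_2$ collapse to one via $g_2^{n-1}=e_2$, and your converse, with $g_2=h(e_1)$, $\psi(x)=g_2^{-1}\ast_2 h(x)$, and evaluation on $(x,y,(n-2)\Cdot e_1)$, is exactly the paper's argument (the paper merely writes $i_1$ for $g_2^{-1}$ and absorbs $h(e_1)^{n-2}=i_1$ directly instead of cancelling a $g_2$ at the end).
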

\begin{proof}
For all $x_1,\ldots,x_n\in X_1$ we compute 
\[h(F_1(x_1,\ldots,x_n))=g_2\ast_2\psi(F_1(x_1,\ldots,x_n))=g_2\ast_2\psi(x_1\ast_1\cdots\ast_1x_n).\]
Since $\psi$ is a group homomorphism, this expression is equal to $g_2\ast_2\psi(x_1)\ast_2\cdots\ast_2\psi(x_n)$.
Moreover, using the definition of $h$ and the commutativity of $\ast_2$ 
\[F_2(h(x_1),\ldots,h(x_n))=h(x_1)\ast_2\cdots\ast_2h(x_n)=g_2\ast\cdots\ast g_2\ast_2\psi(x_1)\ast_2\cdots\ast_2\psi(x_n).\]
The first part of the result follows since the exponent of $(X_2,\ast_2)$ divides $n-1$.

For the second part, we consider a homomorphism $h$ of $n$-ary semigroups from $(X_1,F_1)$ to $(X_2,F_2)$, and we denote by $i_1$ the inverse of $h(e_1)$ with respect to $\ast_2$. Then the map $\psi\colon X_1\to X_2$ defined by 
\[\psi(x)=i_1\ast_2 h(x),\quad x\in X_1\]
is a group homomorphism from $(X_1,\ast_1)$ to $(X_2,\ast_2)$. Indeed, for $x,y\in X_1$ we have
\[\psi(x\ast_1 y)=i_1\ast_2h(x\ast_1 y)=i_1\ast_2h(x\ast_1 y\ast_1e_1\ast_1\cdots\ast_1 e_1)\]
where $e_1$ is repeated $n-2$ times. This expression is equal to $i_1\ast_2h(F_1(x,y,(n-2)\Cdot e_1))$.
Since $h$ is a homomorphism, it is also equal to
\[i_1\ast_2F_2(h(x),h(y),(n-2)\Cdot h(e_1)))=i_1\ast_2x\ast_2y\ast_2 h(e_1)\ast_2\cdots \ast_2h(e_1),\]
where $h(e_1)$ is repeated $n-2$ times. Since the exponent of $(X_2,\ast_2)$ divides $n-1$ it follows that $\psi$ is a group homomorphism.
\end{proof}

\section{Reducibility of symmetric $n$-ary bands}\label{Sec5}
In this section, we use the structure theorem that we developed in the previous section in order to analyze the reducibility problem for symmetric $n$-ary bands. 
 We thus consider a symmetric $n$-ary band $(X,F)=[Y,(X_\alpha,F_\alpha),\varphi_{\alpha,\beta}]$, with associated binary band $(X,B)$. If there is no risk of confusion, we will denote a binary operation by $G\colon X^2\to X$ in a multiplicative way, setting $G(x,y)=x\ast y$. If we use this notation, we then denote by $x^k$ the $k$-fold product $x\ast\cdots\ast x$.

\begin{proposition}\label{reduc}
 If $F$ is reducible to an associative operation $G\colon X^2 \to X$, then the following assertions hold.
 \begin{enumerate}
  \item[(i)] $G$ is surjective and symmetric;
  \item[(ii)] The map $\l_x$ is an endomorphism of $(X,G)$, for every $x\in X$;
  \item[(iii)] We have $\l_{G(x,y)}=\l_x\circ\l_y=\l_{B(x,y)}$ for every $x,y\in X$;
  \item[(iv)] The congruence $\sigma$ associated with $F$ is a congruence for $G$. Moreover, the associated operation $G^\sigma$ on $X/\sigma$ is equal to the quotient operation $B^\sigma$.
 \end{enumerate}
\end{proposition}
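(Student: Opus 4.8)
The plan is to reduce everything to the symmetry of $G$, which is the only substantial point; the remaining assertions then follow quickly from the earlier propositions together with idempotency. Throughout I write $G(x,y)=x\ast y$, so that reducibility means $F(x_1,\ldots,x_n)=x_1\ast\cdots\ast x_n$ and idempotency of $F$ becomes $x^n=x$ for every $x\in X$. From $x^n=x$ one gets $x^{1+k(n-1)}=x$ for all $k\geq 0$ and, in particular, $x^{2(n-1)}=x^{n-1}$; moreover $\l_x(y)=B(x,y)=x^{n-1}\ast y$. A first easy consequence of the symmetry of $F$ is that every $(n-1)$-st power is \emph{central}: applying symmetry to $F(x,(n-1)\Cdot y)=F((n-1)\Cdot y,x)$ gives $x\ast y^{n-1}=y^{n-1}\ast x$ for all $x,y\in X$. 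This centrality is the workhorse of the whole argument.

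For (i), surjectivity is immediate since $z=z^{n}=G(z^{n-1},z)$ for every $z\in X$. The heart of the matter is the symmetry of $G$, and the main obstacle is that $G$ need not admit any cancellation, so I cannot simply permute the two middle arguments of $F$ and cancel off the padding. Instead I would fix $x,y$, set $p=x\ast y$ and $q=y\ast x$, and produce two expressions for the single element $p^{n-1}\ast q$. On one hand, regrouping the idempotency relation $(y\ast x)^{n}=y\ast x$ as $y\ast(x\ast y)^{n-1}\ast x$ and pushing the central factor $p^{n-1}$ to the front gives $q=p^{n-1}\ast q$. On the other hand, the symmetry of $F$ applied to the $n$-tuples $(x,y,(n-2)\Cdot p)$ and $(y,x,(n-2)\Cdot p)$ gives $q\ast p^{n-2}=p^{n-1}$; multiplying on the right by $p$ and using idempotency together with the centrality of $p^{n-1}$ yields $p=p^{n-1}\ast q$. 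Comparing the two identities forces $p=q$, i.e. $x\ast y=y\ast x$. (For $n=2$ the operation $G=F$ is already symmetric, so there is nothing to prove.)

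Once (i) is available, the remaining items are short computations. For (ii), I would expand $\l_x(a\ast b)=x^{n-1}\ast a\ast b$ and $\l_x(a)\ast\l_x(b)=x^{n-1}\ast a\ast x^{n-1}\ast b$, then use the centrality of $x^{n-1}$ and $x^{2(n-1)}=x^{n-1}$ to see they coincide. For (iii), the equality $\l_x\circ\l_y=\l_{B(x,y)}$ is immediate from Proposition \ref{prop:impo} (since $\l_x(y)=B(x,y)$), while $\l_{G(x,y)}=\l_x\circ\l_y$ reduces, after evaluating both sides on an arbitrary $z\in X$, to the identity $(x\ast y)^{n-1}=x^{n-1}\ast y^{n-1}$, which holds because $G$ is now commutative. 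Finally, for (iv), recall that by Proposition \ref{prop:sigma1} we have $x\,\sigma\,y$ if and only if $\l_x=\l_y$; hence if $x\,\sigma\,x'$ and $y\,\sigma\,y'$ then, by (iii), $\l_{G(x,y)}=\l_x\circ\l_y=\l_{x'}\circ\l_{y'}=\l_{G(x',y')}$, so $\sigma$ is a congruence for $G$. The very same computation gives $\l_{G(x,y)}=\l_{B(x,y)}$, that is $G(x,y)\,\sigma\,B(x,y)$, whence $G^{\sigma}([x],[y])=[G(x,y)]=[B(x,y)]=B^{\sigma}([x],[y])$.

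I expect the symmetry of $G$ in (i) to be the only delicate step: it is where the interplay between idempotency, the centrality of $(n-1)$-st powers, and the full symmetry of $F$ is genuinely needed, and where the absence of cancellation has to be circumvented by the two-expression trick above. All other parts are essentially bookkeeping on top of this fact and the structural results already established.
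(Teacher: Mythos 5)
Your proof is correct, and on the one substantial point it takes a genuinely different route from the paper. For assertion (i) the paper does not argue at all: it cites \cite[Fact 4.1]{CouDev} for surjectivity and \cite[Lemma 3.6]{DevKisMar19} for the symmetry of $G$, whereas you give a self-contained proof of symmetry. I checked your argument in detail and it is sound: the centrality $x\ast y^{n-1}=y^{n-1}\ast x$ does follow from comparing $F(x,(n-1)\Cdot y)$ with $F((n-1)\Cdot y,x)$; with $p=x\ast y$, $q=y\ast x$, the regrouping $q=q^n=y\ast p^{n-1}\ast x=p^{n-1}\ast q$ is valid, and comparing $F(x,y,(n-2)\Cdot p)=p^{n-1}$ with $F(y,x,(n-2)\Cdot p)=q\ast p^{n-2}$ and multiplying by $p$ (using $p^n=p$ and centrality of $p^{n-1}$) gives $p=p^{n-1}\ast q$, so $p=q$; the degenerate case $n=2$ is correctly set aside, and the exponent identities $x^{1+k(n-1)}=x$ and $x^{2(n-1)}=x^{n-1}$ that you invoke later all follow from $x^n=x$. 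What your approach buys is independence from the cited literature, at the cost of the two-expression trick that the reference lemma encapsulates; what the paper's citation buys is brevity. The remaining items are essentially the paper's computations: for (ii) the paper shows $\l_x(y\ast z)=\l_x(y)\ast z$ and then combines the symmetry of $G$ with Proposition \ref{prop:end}, while you verify the endomorphism property directly from centrality of $x^{n-1}$ and $x^{2(n-1)}=x^{n-1}$ (this variant does not even need commutativity of $G$, which is a minor bonus); for (iii) both proofs reduce $\l_{G(x,y)}=\l_x\circ\l_y$ to $(x\ast y)^{n-1}=x^{n-1}\ast y^{n-1}$ and get the last equality from Proposition \ref{prop:impo}; and for (iv) the paper deduces that $\sigma$ is a congruence for $G$ from the relation $G(x,y)\,\sigma\,B(x,y)$ together with the fact that $\sigma$ is a congruence for $B$, while your direct check via $\l_{G(x,y)}=\l_x\circ\l_y=\l_{x'}\circ\l_{y'}=\l_{G(x',y')}$ and Proposition \ref{prop:sigma1} amounts to the same argument unwound.
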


\begin{proof}
The surjectivity of $G$ follows from the idempotency of $F$. It was already observed in \cite[Fact 4.1]{CouDev}. The symmetry of $G$ follows from \cite[Lemma 3.6]{DevKisMar19}. 
In order to prove (ii) we compute, for $x,y,z\in X$:
\[\l_x(y\ast z)=F((n-1)\Cdot x,y\ast z)=x^{n-1}\ast (y\ast z)=(x^{n-1}\ast y)\ast z=\l_x(y)\ast z.\]
The result then follows directly from the symmetry of $G$ and Proposition \ref{prop:end}.

Property (iii) is proved in the same way: for every $x,y,z\in X$ we have 
\[\l_{x\ast y}(z)=F((n-1)\Cdot (x\ast y),z)=(x\ast y)^{n-1}\ast z.\]
Using the associativity and the symmetry of $G$, we can rewrite this expression as \[x^{n-1}\ast y^{n-1}\ast z=\l_x(\l_y(z)).\] The last equality in (iii) follows from Proposition \ref{prop:impo} and from the definition of $B$.

Finally, from (iii) and Proposition \ref{prop:sigma1}, we obtain $G(x,y)\sigma B(x,y)$ for every $x,y\in X$. Since $\sigma$ is a congruence for $B$, it is also a congruence for $G$. The quotient operation can be easily computed:
\[G^\sigma([x],[y])=[G(x,y)]=[B(x,y)]=B^\sigma([x],[y]),\quad x,y\in X,\]
and the proof is complete.
\end{proof}
It follows from Proposition \ref{reduc} that if $F$ is reducible to $G$, then $G$ induces an operation $G|_{[x]^2}$ on every $\sigma$-class $[x]$ of $F$. This operation is a reduction of $F|_{[x]^n}$. It is therefore natural to study the properties of the reduction of such operations. This is done in the following result.
\begin{proposition}\label{prop:reduc2}
If $(X,F)$ is the $n$-ary extension of an Abelian group $(X,G_1)$ whose exponent divides $n-1$, then every reduction $(X,G_2)$ of $F$ is a group that is isomorphic to $(X,G_1)$. All the reductions of $(X,F)$ are obtained by using \eqref{eq:dud} with any element $e$ of $X$.
\end{proposition}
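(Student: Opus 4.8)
The plan is to first pin down the neutral elements of $F$, then to show that any reduction must possess an identity, and finally to match each reduction with one of the operations produced by \eqref{eq:dud}. Write $x\ast_1 y=G_1(x,y)$, let $e_1$ be the neutral element of $G_1$, and recall that $F(x_1,\dots,x_n)=x_1\ast_1\cdots\ast_1 x_n$. Because the exponent of $(X,G_1)$ divides $n-1$ we have $y^{n-1}=e_1$ for every $y$, so by symmetry of $\ast_1$ we get $F((k-1)\Cdot e,x,(n-k)\Cdot e)=e^{n-1}\ast_1 x=x$ for every $e,x\in X$ and every $k$. Hence $E_F=X$: every element is neutral. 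By \cite[Lemma~1]{DudMuk06} each $e\in X$ then yields a reduction $G_e$ given by \eqref{eq:dud}, and a direct computation in $G_1$ gives $G_e(x,y)=x\ast_1 e^{n-2}\ast_1 y=x\ast_1 y\ast_1 e^{-1}$ (the inverse taken in $G_1$, using $e^{n-1}=e_1$). One checks that $x\mapsto x\ast_1 e^{-1}$ is an isomorphism from $(X,G_e)$ onto $(X,G_1)$, so every operation obtained from \eqref{eq:dud} is already a group isomorphic to $(X,G_1)$.

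The substance is the converse: every reduction is one of the $G_e$. Let $(X,G_2)$ be an arbitrary reduction, write $x\ast_2 y=G_2(x,y)$, and recall that by Proposition~\ref{reduc} the operation $\ast_2$ is associative, commutative and surjective. The key is to manufacture an identity for $\ast_2$. For this I would use that $F(x,(n-1)\Cdot y)=x$ for all $x,y$, which holds because $y^{n-1}=e_1$ in $G_1$. Reading the left-hand side through the $G_2$-expression of $F$ gives $x\ast_2 y^{\langle n-1\rangle}=x$ for all $x,y$, where $y^{\langle n-1\rangle}$ denotes the $(n-1)$-fold $\ast_2$-product of $y$. Fixing any $a\in X$ and putting $e_2=a^{\langle n-1\rangle}$ then yields $x\ast_2 e_2=x$ for all $x$, and commutativity makes $e_2$ a two-sided identity. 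Substituting $x=e_2$ back gives $y^{\langle n-1\rangle}=e_2$ for all $y$, whence $\ast_2$ has inverses and $(X,G_2)$ is already an Abelian group of exponent dividing $n-1$, although this also follows a posteriori from the next step.

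To finish I would close the loop using the identity just found. Since $e_2\in X=E_F$, the element $e_2$ is neutral for $F$, so $G_{e_2}$ is defined by \eqref{eq:dud}; evaluating $F(x,(n-2)\Cdot e_2,y)$ through the $G_2$-expression and using that $e_2$ is the $\ast_2$-identity gives $G_{e_2}(x,y)=x\ast_2 y=G_2(x,y)$. Thus $G_2=G_{e_2}$ is one of the operations from \eqref{eq:dud}, and by the first step it is a group isomorphic to $(X,G_1)$. Combining the two directions shows that the reductions of $(X,F)$ are exactly $\{G_e:e\in X\}$, which is the assertion.

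The main obstacle is the middle step: producing an identity for an abstractly given reduction $G_2$, about which a priori one knows only associativity, commutativity and surjectivity. The trick is that the exponent hypothesis forces the annihilation identity $F(x,(n-1)\Cdot y)=x$, which, once read in the $\ast_2$-structure, exhibits $a^{\langle n-1\rangle}$ as a unit; the remainder is a routine translation between the $G_1$- and $G_2$-expressions of $F$ together with the Dudek--Mukhin reduction lemma.
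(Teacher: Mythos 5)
Your proof is correct, but it takes a different route from the paper: the paper disposes of Proposition~\ref{prop:reduc2} in one line, citing \cite[Proposition 1.4]{CouDevMarMat19}, whereas you give a self-contained argument from first principles. Your three steps all check out: (a) the exponent hypothesis gives $e^{n-1}=e_1$ for every $e$, so $F((k-1)\Cdot e,x,(n-k)\Cdot e)=e^{n-1}\ast_1 x=x$ and hence $E_F=X$, after which \cite[Lemma 1]{DudMuk06} produces the reduction $G_e$ for each $e$, with the explicit formula $G_e(x,y)=x\ast_1 y\ast_1 e^{-1}$ and the translation $x\mapsto x\ast_1 e^{-1}$ as an isomorphism onto $(X,G_1)$ (the homomorphism identity $x\ast_1 y\ast_1 e^{-2}=x\ast_1 y\ast_1 e^{-2}$ is immediate); (b) for an arbitrary reduction $G_2$, reading the annihilation identity $F(x,(n-1)\Cdot y)=x$ through the $(n-1)$-fold $\ast_2$-iterate correctly exhibits $e_2=a^{\langle n-1\rangle}$ as an identity and forces $y^{\langle n-1\rangle}=e_2$ for all $y$; (c) evaluating $F(x,(n-2)\Cdot e_2,y)$ in the $\ast_2$-expression gives $G_2=G_{e_2}$, closing the loop. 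Two small remarks: you invoke Proposition~\ref{reduc} for commutativity of $G_2$, which is legitimate here since the $n$-ary extension of an Abelian group of exponent dividing $n-1$ is a symmetric $n$-ary band, but you do not really need it --- the mirrored identity $F((n-1)\Cdot y,x)=x$ makes $e_2$ two-sided directly, and commutativity of $G_2$ then falls out of $G_2=G_{e_2}$; and your parenthetical that the group structure of $G_2$ ``also follows a posteriori'' is accurate, so step (b) could be trimmed to just producing $e_2$. What the two approaches buy: the paper's citation keeps Section~\ref{Sec5} short by outsourcing to prior work, while your argument makes the result self-contained within this paper's toolkit (neutral elements, \eqref{eq:dud}, Proposition~\ref{reduc}) and makes transparent \emph{why} the reductions are parameterized exactly by the elements of $X$, namely as the possible locations of the identity.
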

\begin{proof}
This is a direct consequence of \cite[Proposition 1.4]{CouDevMarMat19}.
\end{proof}
We are now able to analyze the reducibility of symmetric $n$-ary bands.
\begin{proposition}\label{prop:reduc}
A symmetric $n$-ary band $(X,F)=[Y,(X_\alpha,F_\alpha),\varphi_{\alpha,\beta}]$ is reducible to a semigroup if and only if there exists a map $e\colon Y\to X$ such that 
\begin{enumerate}
 \item[(i)] For every $\alpha\in Y$, $e(\alpha)=e_\alpha$ belongs to $X_\alpha$;
 \item[(ii)] For every $\alpha,\beta\in Y$ such that $\alpha\geqslant\beta$, we have $\varphi_{\alpha,\beta}(e_\alpha)=e_\beta$.
\end{enumerate}
Moreover, when $(X,F)$ is reducible to a semigroup, a reduction is given by the semigroup decomposed as $[Y,(X_\alpha,G_\alpha),\varphi_{\alpha,\beta}]$, where $G_\alpha$ is the reduction of $F_\alpha$ with respect to $e_\alpha$.
\end{proposition}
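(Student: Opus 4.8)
The plan is to prove both implications by working class by class in the decomposition $[Y,(X_\alpha,F_\alpha),\varphi_{\alpha,\beta}]$ furnished by Theorem \ref{thm:mainnband}, exploiting that each $(X_\alpha,F_\alpha)$ is the $n$-ary extension of an Abelian group whose exponent divides $n-1$. The guiding idea is that a binary reduction $G$ of $F$ must, by Proposition \ref{reduc}, be commutative and respect $\sigma$, so it restricts to each class $X_\alpha$ and turns it into an Abelian group; the distinguished element $e_\alpha$ of the required section will be the identity of that group, and conversely any compatible section lets us reassemble such a commutative reduction.

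For the \emph{only if} part, suppose $F$ is reducible to $G$. By Proposition \ref{reduc}, $G$ is symmetric and $\sigma$ is a congruence for $G$ with $G^\sigma=B^\sigma$, so $G(X_\alpha\times X_\beta)\subseteq X_{\alpha\cw\beta}$ and in particular $G$ restricts to an associative operation $G_\alpha$ on each $X_\alpha$. Since $F=G^{n-1}$ and $G$ preserves the classes, the $(n-1)$-fold extension of $G_\alpha$ is exactly $F_\alpha$; hence $G_\alpha$ is a reduction of $F_\alpha$ and, by Proposition \ref{prop:reduc2}, $(X_\alpha,G_\alpha)$ is an Abelian group. I would define $e_\alpha$ to be its identity, which gives (i). For (ii) I would use that $\varphi_{\alpha,\beta}=\l_y|_{X_\alpha}$ for any $y\in X_\beta$ (the description recalled after Proposition \ref{prop:rightn}), so that $\varphi_{\alpha,\beta}(e_\alpha)=F((n-1)\Cdot y,e_\alpha)=G(y^{n-1},e_\alpha)$, the power being taken in $(X,G)$. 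The crucial point is that $y^{n-1}$ lies in the group $X_\beta$ whose exponent divides $n-1$, whence $y^{n-1}=e_\beta$; then $G(e_\beta,e_\alpha)$ is an idempotent of the group $X_{\alpha\cw\beta}=X_\beta$ and therefore equals $e_\beta$, giving (ii).

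For the \emph{if} part together with the final assertion, suppose a section $e$ satisfying (i) and (ii) is given and set $G_\alpha$ equal to the reduction of $F_\alpha$ with respect to $e_\alpha$ obtained from \eqref{eq:dud}; by Proposition \ref{prop:reduc2} this is an Abelian group with identity $e_\alpha$. The key step is to check that each $\varphi_{\alpha,\beta}$ is a \emph{group} homomorphism from $(X_\alpha,G_\alpha)$ to $(X_\beta,G_\beta)$: it is an $n$-ary homomorphism of the extensions by Proposition \ref{prop:homgroupphi}, so by Proposition \ref{prop:homomorphims} it has the form $x\mapsto g\ast_\beta\psi(x)$ with $\psi$ a group homomorphism; evaluating at $e_\alpha$ and using (ii) forces $g=e_\beta$, so $\varphi_{\alpha,\beta}=\psi$ is a group homomorphism. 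Together with conditions (a) and (b), which $\varphi_{\alpha,\beta}$ already satisfies, this makes $(X,G):=[Y,(X_\alpha,G_\alpha),\varphi_{\alpha,\beta}]$ a strong (binary) semilattice of groups, hence a semigroup. It then remains to verify that $F=G^{n-1}$, i.e.\ that $F$ is reducible to $G$: I would establish the general fact that for a strong binary semilattice of semigroups the $(n-1)$-fold extension again satisfies the defining identity (c) of Definition \ref{def:strongnsem} with the data $(X_\alpha,G_\alpha^{n-1},\varphi_{\alpha,\beta})$, by a routine induction on the number of factors using that the $\varphi_{\alpha,\beta}$ are homomorphisms and satisfy (b). Since $G_\alpha^{n-1}=F_\alpha$ and formula (c) determines an operation completely, $G^{n-1}$ and $F$ coincide.

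I expect the main obstacle to be the bookkeeping that identifies the band connecting maps $\varphi_{\alpha,\beta}$ with genuine group homomorphisms: in the \emph{only if} direction this is where the hypothesis that each exponent divides $n-1$ is indispensable (it collapses $y^{n-1}$ to $e_\beta$), and in the \emph{if} direction Proposition \ref{prop:homomorphims} shows that condition (ii) is precisely what rules out the extra translation term $g$. The verification that taking $n$-ary extensions commutes with the strong semilattice construction is conceptually the last hurdle, but it is a direct induction rather than a genuine difficulty.
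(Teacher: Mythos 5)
Your proposal is correct and follows essentially the same route as the paper's proof: both directions rest on Propositions \ref{reduc}, \ref{prop:reduc2} and \ref{prop:homomorphims}, with the reduction reassembled as the strong semilattice $[Y,(X_\alpha,G_\alpha),\varphi_{\alpha,\beta}]$ and the identity $F=G^{n-1}$ verified by the same induction that the paper carries out via \eqref{eq:phialpha}. The only cosmetic difference is your verification of (ii) in the \emph{only if} direction, where you compute $\varphi_{\alpha,\beta}(e_\alpha)=y^{n-1}\ast e_\alpha=e_\beta\ast e_\alpha=e_\beta$ directly, instead of first noting, as the paper does using Proposition \ref{reduc}(ii), that the connecting maps are group homomorphisms and hence send identity to identity.
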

\begin{proof}
 Assume first that $(X,F)$ is reducible to a semigroup $(X,G)$. By Proposition \ref{reduc}, we have a decomposition of $(X,G)$ as $[Y,(X_\alpha,G_\alpha),\varphi_{\alpha,\beta}]$, where $Y=X/\sigma$, the sets $X_\alpha$ are the $\sigma$-classes of $X$, and the maps $\varphi_{\alpha,\beta}$ are given, for $\alpha=[x]\geqslant \beta=[y]$, by $\varphi_{\alpha,\beta}=\l_y|_{[x]}$. Using (ii) of Proposition \ref{reduc}, a direct computation show that these maps are group homomorphisms. Moreover, by Proposition \ref{prop:reduc2}, the restriction $G|_{[x]^2}$ of $G$ to each $\sigma$-class $[x]$ is a reduction of the restriction of $F$ to this class, and is therefore associated with an element $e_{[x]}$ in this class. Since $e_{[x]}$ is the unit of the group $([x],G|_{[x]^2})$ and $\varphi_{[x],[y]}$ is a group homomorphism, we have $\varphi_{[x],[y]}(e_{[x]})=e_{[y]}$, and conditions (i) and (ii) are satisfied for $e([x])=e_{[x]}$.
 
 Conversely, assume that (i) and (ii) are satisfied. For every $\alpha\in Y$, denote by $G_\alpha$ the reduction of $F_\alpha$ associated with $e_\alpha$. If $\alpha\geqslant \beta$, $\varphi_{\alpha,\beta}$ is a homomorphism from $(X_\alpha,F_\alpha)$ to $(X_\beta,F_\beta)$. By Proposition \ref{prop:homomorphims} and condition (ii), $\varphi_{\alpha,\beta}$ is a group homomorphism. Conditions (a) and (b) of Definition \ref{def:strong} are then satisfied, and it follows that $[Y,(X_\alpha,G_\alpha),\varphi_{\alpha,\beta}]$ defines a semigroup $G$ by condition (c) of the same definition. It now remains to check that $(X,G)$ is a reduction of $(X,F)$. To keep the notation light, we set $G_\alpha(x,y)=x\ast_\alpha y$ and $G(x,y)=x\ast y$. We first observe that, when $\alpha,\alpha_1,\alpha_2\in Y$ are such that $\alpha\leqslant \alpha_1\cw \alpha_2$, we have 
 \begin{equation}\label{eq:phialpha}
  \varphi_{\alpha_1,\alpha}(x_1)\ast_\alpha \varphi_{\alpha_2,\alpha}(x_2)=\varphi_{\alpha_1\cw\alpha_2,\alpha}(x_1\ast x_2).
 \end{equation}
This relation is obtained by decomposing  $\varphi_{\alpha_i,\alpha}(x_i)$ as  $\varphi_{\alpha_1\cw\alpha_2,\alpha}(\varphi_{\alpha_i,\alpha_1\cw\alpha_2}(x_i))$ in the left-hand side, for $i\in\{1,2\}$, using that $\varphi_{\alpha_1\cw\alpha_2,\alpha}$ is a homomorphism and finally using the definition of $\ast$. Then, if $x_i\in X_{\alpha_i}$ for every $i\in\{1,\ldots,n\}$, setting $\alpha=\alpha_1\cw\cdots\cw\alpha_n$ we have by definition
\[F(x_1,\ldots,x_n)=F_\alpha(\varphi_{\alpha_1,\alpha}(x_1),\ldots,\varphi_{\alpha_n,\alpha}(x_n)).\]
Since $F_\alpha$ is the $n$-ary extension of $\ast_\alpha$, we also have 
\[F(x_1,\ldots,x_n)=\varphi_{\alpha_1,\alpha}(x_1)\ast_\alpha\ldots\ast_\alpha\varphi_{\alpha_n,\alpha}(x_n).\]
Then using \eqref{eq:phialpha} we prove by induction 
\[F(x_1,\ldots,x_n)=\varphi_{\alpha_1\cw\cdots\cw\alpha_i}(x_1\ast\cdots\ast x_i)\ast_\alpha\varphi_{\alpha_{i+1},\alpha}(x_{i+1})\ast_\alpha\ldots\ast_\alpha\varphi_{\alpha_n,\alpha}(x_n),\]
for every $i\in\{1,\ldots,n\}$. Considering $i=n$ leads to the desired result.
\end{proof}
\begin{remark}
In the proof of Proposition \ref{prop:reduc} we could also show that when conditions (i) and (ii) are satisfied, we can adjoin a neutral element to $(X,F)$, and obtain a reduction of $(X,F)$ to a semigroup using Theorem 1 of \cite{DudMuk06}. Our approach enables us to describe the strong semilattice decomposition of the reduction explicitly.
\end{remark}
\begin{example}
For the structures of Example \ref{ex:1.1}, the only non obvious homomorphisms are give by $\varphi_{[1],[3]}=\l_3|_{[1]}$ and $\varphi_{[2],[3]}=\l_3|_{[2]}$. 
\begin{enumerate}                                                                                                                                                \item For $(X,F_1)$, we have $\varphi_{[1],[3]}(1)=\l_3(1)=4$ and $\varphi_{[2],[3]}(2)=\l_3(2)=3$;
\item For $(X,F_2)$, we have $\varphi_{[1],[3]}(1)=\l_3(1)=4$ and $\varphi_{[2],[3]}(2)=\l_3(2)=4$.
\end{enumerate}
By Proposition \ref{prop:homomorphims} these are homomorphisms from the ternary extension of the trivial group to the ternary extension of $(\Z_2,+)$. It is easy to see that $(X,F_1)$ and $(X,F_2)$ are the strong ternary semilattices associated with the semilattice given in Example \ref{ex:example3} and the ternary extensions of groups and homomorphisms given here. It follows from Theorem \ref{thm:mainnband} that $(X,F_1)$ and $(X,F_2)$ are symmetric ternary bands.
Finally, we can use Proposition \ref{prop:reduc} to analyze the reducibility problem for $(X,F_1)$ and $(X,F_2)$.
\begin{enumerate}
 \item For $(X,F_1)$ we must have $e([1])=1$ and $e([2])=2$. Then we must have $e([3])=\varphi_{[1],[3]}(1)=4$ but also $e([3])=\varphi_{[2],[3]}(2)=3$, a contradiction. So $(X,F_1)$ is not reducible to a semigroup.
 \item For $(X,F_2)$ the map $e$ defined by $e([1])=1$, $e([2])=2$ and $e([3])=4$ satisfies the conditions of Proposition \ref{prop:reduc} and so $(X,F_2)$ is reducible to a semigroup.
\end{enumerate}
\end{example} 

\section*{Acknowledgments}

The authors would like to thank M. Couceiro, J.-L. Marichal and N. Z\'ena\"idi for fruitful discussions. The first author is supported by the Luxembourg National Research Fund under the project PRIDE 15/10949314/GSM.


\begin{thebibliography}{99}


\bibitem{Ack20}
N.~L.~Ackerman.
\newblock A characterization of quasitrivial $n$-semigroups.
\newblock To appear in {\em Algebra Universalis}.

\bibitem{Cli54}
A.~H.~Clifford.
\newblock Bands of semigroups,
\newblock {\em Proc. Amer. Math. Soc.}, 5, 499--504, 1954.

\bibitem{Cli61}
A.~H.~Clifford and G.~B.~Preston.
\newblock {\em The algebraic theory of semigroups.}
\newblock American Mathematical Society, Vol. 1, 1961.

\bibitem{CouDev}
M.~Couceiro and J.~Devillet.
\newblock Every quasitrivial $n$-ary semigroup is reducible to a semigroup.
\newblock {\em Algebra Universalis}, 80(4), 2019.

\bibitem{CouDevMarMat19}
M.~Couceiro, J.~Devillet, J.-L.~Marichal, and P.~Mathonet.
\newblock Reducibility of $n$-ary semigroups: from quasitriviality towards idempotency. Submitted for publication.
arXiv:1909.10412.

\bibitem{DevKisMar19}
J.~Devillet, G.~Kiss, and J.-L.~Marichal.
\newblock Characterizations of quasitrivial symmetric nondecreasing associative operations.
\newblock {\em Semigroup Forum}, 98(1), 154--171, 2019.

\bibitem{DudMuk06}
W.~A.~Dudek and V.~V.~Mukhin.
\newblock On $n$-ary semigroups with adjoint neutral element.
\newblock {\em Quasigroups and Related Systems}, 14:163--168, 2006.

\bibitem{Dor28}
W.~D\"ornte.
\newblock Untersuchungen \"uber einen verallgemeinerten Gruppenbegriff.
\newblock {\em Math. Z.}, 29:1--19, 1928.

\bibitem{Gri01}
P.~A.~Grillet.
\newblock {\em Commutative semigroups.}
\newblock Springer, Boston, MA, 2001.

\bibitem{How95}
J. M.~Howie.
\newblock {\em Fundamentals of semigroup theory.}
\newblock Oxford University Press, 1995.

\bibitem{KiSom18}
G.~Kiss and G.~Somlai.
\newblock Associative idempotent nondecreasing functions are reducible.
\newblock {\em Semigroup Forum}, 98(1), 140--153, 2019.

\bibitem{Lan80}
H.~L\"anger.
\newblock The free algebra in the variety generated by quasi-trivial semigroups.
\newblock {\em Semigroup Forum}, 20:151--156, 1980.

\bibitem{LehSta19}
E.~Lehtonen and F.~Starke.
\newblock On associative operations on commutative integral domains.
\newblock {\em Semigroup Forum}, 2019. https://doi.org/10.1007/s00233-019-10044-x

\bibitem{LehPil14}
E.~Lehtonen and A.~Pilitowska.
\newblock Generalized entropy in expanded semigroups and in algebras with neutral element.
\newblock {\em Semigroup Forum}, 2014. https://doi.org/10.1007/s00233-013-9563-y

\bibitem{Mac54}
D.~McLean.
\newblock Idempotent semigroups.
\newblock {\em Amer. Math. Monthly}, 61:110--113, 1954.

\bibitem{Pet73}
M.~Petrich.
\newblock Introduction to semigroups
\newblock {Charles E. Merrill}, 1973.

\bibitem{Pet77}
M.~Petrich.
\newblock Lectures in semigroups
\newblock{John Wiley}, London, 1977.

\bibitem{Pos40}
E.~L.~Post.
\newblock Polyadic groups,
\newblock {\em Trans. Amer. Math. Soc.}, 48:208--350, 1940.

\bibitem{Put}
M.~S.~Putcha.
\newblock Minimal sequences in semigroups.
\newblock {\em Trans. Amer. Math. Soc.}, 189, 93--106, 1974.

\bibitem{Qui17}
T.~ Quinn-Gregson.
\newblock {\em Homogeneity and $\aleph_0$-categoricity of semigroups.}
\newblock PhD thesis, University of York, 2017.

\bibitem{Tam}
T.~Tamura.
\newblock Remark on the smallest semilattice congruence .
\newblock {\em Semigroup Forum}, 5(1):277--282, 1972.
\end{thebibliography}
\end{document}